\providecommand{\abs}[1]{\left\lvert#1 \right\rvert}
\newtheorem{theorem}{Theorem}[section]
\newtheorem{lemma}[theorem]{Lemma}
\newtheorem{corollary}[theorem]{Corollary}
\theoremstyle{remark}
\newtheorem{remark}[theorem]{Remark}
\numberwithin{equation}{section}
\begin{document}
\begin{center}
{\huge {\bf On the convergence of series of moments for row sums of random variables}} \\
\vspace{1.0cm}
{\Large Jo\~{a}o Lita da Silva\footnote{\textit{E-mail address:} \texttt{jfls@fct.unl.pt}; \texttt{joao.lita@gmail.com}}} \\
\vspace{0.1cm}
\textit{Department of Mathematics and GeoBioTec \\ Faculty of Sciences and Technology \\
NOVA University of Lisbon \\ Quinta da Torre, 2829-516 Caparica,
Portugal}
\end{center}

\bigskip

\bigskip

\begin{abstract}
Given a triangular array $\left\{X_{n,k}, \, 1 \leqslant k \leqslant n, n \geqslant 1 \right\}$ of random variables satisfying $\mathbb{E} \lvert X_{n,k} \rvert^{p} < \infty$ for some $p \geqslant 1$ and sequences $\{b_{n} \}$, $\{c_{n} \}$ of positive real numbers, we shall prove that $\sum_{n=1}^{\infty} c_{n} \mathbb{E} \left[\abs{\sum_{k=1}^{n} (X_{n,k} - \mathbb{E} \, X_{n,k})}/b_{n} - \varepsilon \right]_{+}^{p} < \infty$, where $x_{+} = \max(x,0)$. Our results are announced in a general setting, allowing us to obtain the convergence of the series in question under various types of dependence.
\end{abstract}

\bigskip

{\small{\textit{Key words:} convergence of series of moments, dependent random variables}}

\bigskip

{\small{\textbf{2010 Mathematics Subject Classification:} 60F15}}

\bigskip

\section{Introduction}\label{sec:1}

In \cite{Li05}, Li and Sp\u{a}taru proved the following statement: if $\{X_{n}, \, n \geqslant 1 \}$ is a sequence of independent and identically distributed (i.i.d.) random variables with $\mathbb{E} \, X_{1} = 0$ and $p > 0$, $0 < q < 2$, $r \geqslant 1$ are such that $qr \geqslant 1$, then
\begin{equation}\label{eq:1.1}
\left\{
\begin{array}{l}
  \mathbb{E} \lvert X_{1} \rvert^{p} < \infty \; \; \text{if $p > qr$} \\[5pt]
  \mathbb{E} \lvert X_{1} \rvert^{qr} \log(1 + \lvert X_{1} \rvert) < \infty \; \; \text{if $p = qr$} \\[5pt]
  \mathbb{E} \lvert X_{1} \rvert^{qr} < \infty \; \; \text{if $p < qr$}
\end{array}
\right.
\end{equation}
is equivalent to
\begin{equation*}
\int_{\varepsilon}^{\infty} \sum_{n=1}^{\infty} n^{r - 2} \mathbb{P} \left\{\abs{\sum_{k=1}^{n} X_{k}} > x^{1/p} n^{1/q} \right\} \, \mathrm{d}x < \infty \; \; \text{for all $\varepsilon > 0$}.
\end{equation*}
A few years later, Chen and Wang \cite{Chen08} showed that, letting $p>0$, $\{X_{n}, \, n \geqslant 1 \}$ be a random sequence and $\{b_{n} \}$, $\{c_{n} \}$ be sequences of positive real numbers,
\begin{equation*}
\int_{\varepsilon}^{\infty} \sum_{n=1}^{\infty} c_{n} \mathbb{P} \left\{\abs{X_{n}} > x^{1/p} b_{n} \right\} \, \mathrm{d}x < \infty \; \; \text{for all $\varepsilon > 0$}
\end{equation*}
and
\begin{equation*}
\sum_{n=1}^{\infty} c_{n} \, \mathbb{E} \left[\max\left(\frac{\lvert X_{n} \rvert}{b_{n}} - \varepsilon,0 \right) \right]^{p} < \infty \; \; \text{for all $\varepsilon > 0$}
\end{equation*}
are equivalent. Hence, putting $x_{+} = \max(x,0)$, Li and Sp\u{a}taru's result can be restated as: if $\{X_{n}, \, n \geqslant 1 \}$ is a sequence of i.i.d. random variables with $\mathbb{E} \, X_{1} = 0$ and $p > 0$, $0 < q < 2$, $r \geqslant 1$ are such that $qr \geqslant 1$, then \eqref{eq:1.1} is equivalent to
\begin{equation}\label{eq:1.2}
\sum_{n=1}^{\infty} n^{r - 2} \mathbb{E} \left(n^{-1/q} \abs{\sum_{k=1}^{n} X_{k}} - \varepsilon \right)_{+}^{p} < \infty \; \; \text{for all $\varepsilon > 0$}.
\end{equation}
The extension of \eqref{eq:1.2} to arrays of (dependent) random variables has been emerged in literature over the last years (see \cite{Sung09}, \cite{Wu13}, \cite{Wu14}, \cite{Wu15} or more recently \cite{Yi18}). Our purpose in this paper is to give general sufficient conditions to obtain
\begin{equation}\label{eq:1.3}
\sum_{n=1}^{\infty} c_{n} \mathbb{E} \left[\frac{\abs{\sum_{k=1}^{n} (X_{n,k} - \mathbb{E} \, X_{n,k})}}{b_{n}} - \varepsilon \right]_{+}^{p} < \infty \; \; \text{for all $\varepsilon > 0$}
\end{equation}
when $\left\{X_{n,k}, \, 1 \leqslant k \leqslant n, n \geqslant 1 \right\}$ is a triangular array of random variables and $\{b_{n} \}$, $\{c_{n} \}$ are sequences of positive constants. Namely, we shall assume that row sums of a suitable truncated triangular array of random variables satisfies classical moment inequalities, scilicet, a von Bahr-Esseen type inequality \cite{von65} or a Rosenthal type inequality (see, for instance, \cite{Petrov95} page $59$). These are general assumptions which cover well-known dependent structures, particularly extended negatively dependence or pairwise negatively quadrant dependence (see Lemma~\ref{lem:3.1} in last section).

In the sequel, we shall denote the indicator random variable of an event $A$ by $I_{A}$ and, for each $t > 0$, we shall define also the function $g_{t}(x) = \max(\min(x,t),-t)$ which describes the truncation at level $t$.

%\newpage

\section{Main results}\label{sec:2}

In our first two statements, we shall establish the convergence of series \eqref{eq:1.3} by assuming that, for any $t > 0$, the (truncated) array of random variables $\left\{g_{t}(X_{n,k}), \, 1 \leqslant k \leqslant n, n \geqslant 1 \right\}$ satisfies a von Bahr-Esseen type inequality, i.e. there is a sequence of positive numbers $\{\alpha_{n} \}$ such that for some $q > 1$,
\begin{equation}\label{eq:2.1}
\mathbb{E} \abs{\sum_{k=1}^{n} \left[g_{t}(X_{n,k}) - \mathbb{E} \, g_{t}(X_{n,k}) \right]}^{q} \leqslant \alpha_{n} \sum_{k=1}^{n} \mathbb{E} \, \lvert g_{t}(X_{n,k}) \rvert^{q}
\end{equation}
for all $n \geqslant 1$ and $t > 0$.

\begin{theorem}\label{thr:2.1}
Let $p > 1$, $\left\{X_{n,k}, \, 1 \leqslant k \leqslant n, n \geqslant 1 \right\}$ be an array of random variables satisfying $\mathbb{E} \lvert X_{n,k} \rvert^{p} < \infty$ for each $1 \leqslant k \leqslant n$ and all $n \geqslant 1$, and verifying \eqref{eq:2.1} for a $q > p$ and some sequence $\{\alpha_{n} \}$ of positive numbers. If $\{b_{n} \}$, $\{c_{n} \}$ are real sequences of positive numbers such that \\

\noindent \textnormal{(a)} $\sum_{n=1}^{\infty} \sum_{k=1}^{n} \alpha_{n} c_{n} b_{n}^{-q} \int_{0}^{b_{n}^{q}} \mathbb{P} \left\{\lvert X_{n,k} \rvert^{q} > t \right\} \, \mathrm{d}t < \infty$, \\

\noindent \textnormal{(b)} $\sum_{n=1}^{\infty} \sum_{k=1}^{n} c_{n} \mathbb{E} \, \lvert X_{n,k} \rvert I_{\left\{\lvert X_{n,k} \rvert > b_{n} \right\}}/b_{n} < \infty$, \\

\noindent \textnormal{(c)} $\sum_{n=1}^{\infty} \sum_{k=1}^{n} (1 + \alpha_{n}) c_{n} b_{n}^{-p} \int_{b_{n}^{p}}^{\infty} \mathbb{P} \left\{\lvert X_{n,k} \rvert^{p} > t \right\} \, \mathrm{d}t < \infty$, \\

\noindent then
\begin{equation*}
\sum_{n=1}^{\infty} c_{n} \mathbb{E} \left[\frac{\abs{\sum_{k=1}^{n} (X_{n,k} - \mathbb{E} \, X_{n,k})}}{b_{n}} - \varepsilon \right]_{+}^{p} < \infty
\end{equation*}
for all $\varepsilon > 0$.
\end{theorem}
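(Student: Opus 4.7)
The approach is by truncation. Set $Y_{n,k} := g_{b_n}(X_{n,k})$ and $V_{n,k} := X_{n,k} - Y_{n,k}$, so that $|V_{n,k}| = (|X_{n,k}|-b_n)_+$ and the row sum $S_n := \sum_{k=1}^{n}(X_{n,k}-\mathbb{E} X_{n,k})$ splits as $S_n = T_n + R_n$ with $T_n = \sum_{k=1}^{n}(Y_{n,k}-\mathbb{E} Y_{n,k})$ and $R_n = \sum_{k=1}^{n}(V_{n,k}-\mathbb{E} V_{n,k})$. Since $[|u+v|-\varepsilon]_+\leqslant [|u|-\varepsilon/2]_+ + [|v|-\varepsilon/2]_+$ and $(x+y)^p\leqslant 2^{p-1}(x^p+y^p)$ for $p\geqslant 1$,
\begin{equation*}
[|S_n|/b_n-\varepsilon]_+^p \leqslant 2^{p-1}\bigl\{[|T_n|/b_n-\varepsilon/2]_+^p + [|R_n|/b_n-\varepsilon/2]_+^p\bigr\},
\end{equation*}
so it suffices to handle the two halves separately.

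For $T_n$ I would use the identity $\mathbb{E}[Z-\eta]_+^p = p\int_0^\infty s^{p-1}\mathbb{P}\{Z>\eta+s\}\,\mathrm{d}s$ with $Z=|T_n|/b_n$, $\eta=\varepsilon/2$, Markov's inequality at exponent $q$, and \eqref{eq:2.1} at $t=b_n$:
\begin{equation*}
\mathbb{P}\{|T_n|>b_n(\varepsilon/2+s)\} \leqslant \frac{\alpha_n\sum_{k=1}^{n}\mathbb{E}|Y_{n,k}|^q}{[b_n(\varepsilon/2+s)]^q}.
\end{equation*}
Because $q>p$, the integral $\int_0^\infty s^{p-1}(\varepsilon/2+s)^{-q}\,\mathrm{d}s$ is a finite constant, and $\mathbb{E}|Y_{n,k}|^q = \int_0^{b_n^q}\mathbb{P}\{|X_{n,k}|^q>t\}\,\mathrm{d}t$ (since $|Y_{n,k}|=\min(|X_{n,k}|,b_n)$). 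Summability of $\sum_n c_n\mathbb{E}[|T_n|/b_n-\varepsilon/2]_+^p$ then follows directly from condition (a).

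For $R_n$, the triangle inequality gives $|R_n|\leqslant\big|\sum_k V_{n,k}\big|+\sum_k\mathbb{E}|V_{n,k}|$, and $\mathbb{E}|V_{n,k}|\leqslant\mathbb{E}|X_{n,k}|I_{\{|X_{n,k}|>b_n\}}$ together with (b) disposes of the deterministic offset. To treat the random sum I would perform a second truncation at level $2b_n$, writing $V_{n,k}=A_{n,k}+B_{n,k}$ with $A_{n,k}=g_{2b_n}(X_{n,k})-g_{b_n}(X_{n,k})$ (bounded pointwise by $b_n$) and $B_{n,k}=X_{n,k}-g_{2b_n}(X_{n,k})$ (supported on $\{|X_{n,k}|>2b_n\}$). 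The $A$-piece is amenable to exactly the same Markov-with-$q$ argument as $T_n$: apply \eqref{eq:2.1} at both levels $b_n$ and $2b_n$ and combine by the $L^q$-triangle inequality, noting that $\mathbb{E}|g_{2b_n}(X_{n,k})|^q$ exceeds $\mathbb{E}|Y_{n,k}|^q$ by at most $(2^q-1)b_n^q\mathbb{P}\{|X_{n,k}|>b_n\}$, itself dominated by the integrand in (a). For the $B$-piece the key pointwise estimate is $|B_{n,k}|^p\leqslant(|X_{n,k}|-b_n)_+^p\leqslant[|X_{n,k}|^p-b_n^p]_+$ (from $(y-c)^p\leqslant y^p-c^p$, valid for $y\geqslant c\geqslant 0$ and $p\geqslant 1$), which hands the estimate over to condition (c).

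The main obstacle will be the tail piece: a direct $c_r$-inequality on $\bigl(\sum_k|B_{n,k}|\bigr)^p$ would introduce a parasitic factor $n^{p-1}$. I plan to circumvent this by returning to the integral representation $\mathbb{E}[|R_n|/b_n-\varepsilon/2]_+^p = p\int_0^\infty s^{p-1}\mathbb{P}\{|R_n|>b_n(\varepsilon/2+s)\}\,\mathrm{d}s$ and bounding the probability by a further truncation whose level grows with $s$, using Markov at exponent $q$ on the centred truncated part (covered by (a)) and the pointwise/moment estimates above on the residual tail (covered by (b) and (c), without any $n^{p-1}$ loss). The delicate book-keeping---in particular, separating the indices $n$ for which the deterministic offset $\sum_k\mathbb{E}|V_{n,k}|/b_n$ already exceeds $\varepsilon/4$ (a set which (b) renders harmless in the weighted sense) from those where it does not---is, I expect, the chief technical hurdle of the argument.
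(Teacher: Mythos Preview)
Your handling of the truncated piece $T_n$ is fine and matches the paper's spirit: Markov at exponent $q$ together with \eqref{eq:2.1} and the identity $\mathbb{E}|g_{b_n}(X_{n,k})|^q=\int_0^{b_n^q}\mathbb{P}\{|X_{n,k}|^q>t\}\,\mathrm{d}t$ reduces everything to condition~(a).

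The tail piece $R_n$, however, is where your plan stalls. You correctly diagnose that a direct $c_r$-bound on $\bigl(\sum_k|B_{n,k}|\bigr)^p$ costs a factor $n^{p-1}$, and you propose a ``further truncation whose level grows with $s$'' together with ``delicate book-keeping'', but you do not actually carry it out. Moreover, because \eqref{eq:2.1} is only assumed for $g_t(X_{n,k})$, every time you want a $q$th-moment bound on a centred piece of $R_n$ you must rewrite it as a difference of two $g_t$-truncations and pay the $L^q$-triangle inequality; iterating this through a variable-level truncation on top of the already-performed $b_n$-truncation is workable but heavy.

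The paper avoids all of this by \emph{not} decomposing $S_n$ at the outset. Instead it writes
\[
\mathbb{E}\bigl[|S_n|-\varepsilon b_n\bigr]_+^{\,p}=\int_0^\infty \mathbb{P}\{|S_n|>\varepsilon b_n+t^{1/p}\}\,\mathrm{d}t
\]
and splits the \emph{integration range} at $t=b_n^p$. For $t\leqslant b_n^p$ the integrand is bounded by $\mathbb{P}\{|S_n|>\varepsilon b_n\}$, which one estimates by a single fixed truncation at $b_n$ (Markov at exponent $q$ on $g_{b_n}(X_{n,k})$ via \eqref{eq:2.1}, Markov at exponent $1$ on the residual), yielding (a) and (b). For $t>b_n^p$ one truncates at the \emph{variable} level $t^{1/p}$ and applies the same pair of Markov bounds; a Fubini computation then converts the resulting double integral into the quantities in (a) and (c), and the residual part gives~(c) after the elementary bound $\int_{b_n^p}^{s^p}t^{-1/p}\,\mathrm{d}t\leqslant \tfrac{p}{p-1}s^{p-1}$. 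This is exactly the ``truncation whose level grows'' that you reach for, but applied directly to $S_n$ rather than to the already-truncated remainder $R_n$, which removes the need for your second fixed truncation at $2b_n$, the $L^q$-triangle gymnastics, and the case distinction on the size of $\sum_k\mathbb{E}|V_{n,k}|/b_n$.
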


\begin{proof}
Fixing $\varepsilon > 0$, we have
\begin{equation}\label{eq:2.2}
\begin{split}
& \mathbb{E} \left[\abs{\sum_{k=1}^{n} (X_{n,k} - \mathbb{E} \, X_{n,k})} - \varepsilon b_{n} \right]_{+}^{p} \\
& \qquad = \int_{0}^{\infty} \mathbb{P} \left\{\abs{\sum_{k=1}^{n} (X_{n,k} - \mathbb{E} \, X_{n,k})} > \varepsilon b_{n} + t^{1/p} \right\} \mathrm{d}t \\
& \qquad \leqslant b_{n}^{p} \mathbb{P} \left\{\abs{\sum_{k=1}^{n} (X_{n,k} - \mathbb{E} \, X_{n,k})} > \varepsilon b_{n} \right\} + \int_{b_{n}^{p}}^{\infty} \mathbb{P} \left\{\abs{\sum_{k=1}^{n} (X_{n,k} - \mathbb{E} \, X_{n,k})} > t^{1/p} \right\} \mathrm{d}t.
\end{split}
\end{equation}
Defining $X_{n,k}' := g_{b_{n}}(X_{n,k})$ and $X_{n,k}'' = X_{n,k} - X_{n,k}'$, Chebyshev inequality and assumption \eqref{eq:2.1} entail
\begin{equation}\label{eq:2.3}
\begin{split}
& \mathbb{P} \left\{\abs{\sum_{k=1}^{n} (X_{n,k} - \mathbb{E} \, X_{n,k})} > \varepsilon b_{n} \right\} \\
& \; \; \leqslant \mathbb{P} \left\{\abs{\sum_{k=1}^{n} (X_{n,k}' - \mathbb{E} \, X_{n,k}')} > \frac{\varepsilon b_{n}}{2} \right\} + \mathbb{P} \left\{\abs{\sum_{k=1}^{n} (X_{n,k}'' - \mathbb{E} \, X_{n,k}'')} > \frac{\varepsilon b_{n}}{2} \right\} \\
& \; \;  \leqslant \frac{2^{q}}{\varepsilon^{q} b_{n}^{q}} \mathbb{E} \abs{\sum_{k=1}^{n} (X_{n,k}' - \mathbb{E} \, X_{n,k}')}^{q} + \frac{2}{\varepsilon b_{n}} \mathbb{E} \abs{\sum_{k=1}^{n} (X_{n,k}'' - \mathbb{E} \, X_{n,k}'')} \\
& \; \; \leqslant \frac{2^{q} \alpha_{n}}{\varepsilon^{q} b_{n}^{q}} \sum_{k=1}^{n} \mathbb{E} \, \lvert X_{n,k}' \rvert^{q} + \frac{4}{\varepsilon b_{n}} \sum_{k=1}^{n} \mathbb{E} \, \lvert X_{n,k}'' \rvert \\
& \; \; \leqslant \frac{2^{2q - 1} \alpha_{n}}{\varepsilon^{q} b_{n}^{q}} \sum_{k=1}^{n} \left[\mathbb{E} \, \lvert X_{n,k} \rvert^{q} I_{\left\{\lvert X_{n,k} \rvert \leqslant b_{n} \right\}} + b_{n}^{q} \mathbb{P} \left\{\lvert X_{n,k} \rvert > b_{n} \right\} \right] + \frac{4}{\varepsilon b_{n}} \sum_{k=1}^{n} \mathbb{E} \, \lvert X_{n,k} \rvert I_{\left\{\lvert X_{n,k} \rvert > b_{n} \right\}} \\
& \; \; = \frac{2^{2q - 1} \alpha_{n}}{\varepsilon^{q} b_{n}^{q}} \sum_{k=1}^{n} \int_{0}^{b_{n}^{q}} \mathbb{P} \left\{\lvert X_{n,k} \rvert^{q} > t \right\} \mathrm{d}t + \frac{4}{\varepsilon b_{n}} \sum_{k=1}^{n} \mathbb{E} \, \lvert X_{n,k} \rvert I_{\left\{\lvert X_{n,k} \rvert > b_{n} \right\}}.
\end{split}
\end{equation}
Setting $Y_{n,k}' := g_{t^{1/p}}(X_{n,k})$ and $Y_{n,k}'' = X_{n,k} - Y_{n,k}'$, it follows
\begin{align*}
&\int_{b_{n}^{p}}^{\infty} \mathbb{P} \left\{\abs{\sum_{k=1}^{n} (X_{n,k} - \mathbb{E} \, X_{n,k})} > t^{1/p} \right\} \mathrm{d}t \\
&\qquad \leqslant \int_{b_{n}^{p}}^{\infty} \mathbb{P} \left\{\abs{\sum_{k=1}^{n} (Y_{n,k}' - \mathbb{E} \, Y_{n,k}')} > \frac{t^{1/p}}{2} \right\} \mathrm{d}t + \int_{b_{n}^{p}}^{\infty} \mathbb{P} \left\{\abs{\sum_{k=1}^{n} (Y_{n,k}'' - \mathbb{E} \, Y_{n,k}'')} > \frac{t^{1/p}}{2} \right\} \mathrm{d}t.
\end{align*}
Hence,
\begin{align}
\int_{b_{n}^{p}}^{\infty} & \mathbb{P} \left\{\abs{\sum_{k=1}^{n} \left(Y_{n,k}' - \mathbb{E} \, Y_{n,k}' \right)} > \frac{t^{1/p}}{2} \right\} \mathrm{d}t \notag \\
&\leqslant 2^{q} \int_{b_{n}^{p}}^{\infty} t^{-q/p} \mathbb{E} \abs{\sum_{k=1}^{n} \left(Y_{n,k}' - \mathbb{E} \, Y_{n,k}' \right)}^{q} \mathrm{d}t \notag \\
&\leqslant 2^{q} \alpha_{n} \int_{b_{n}^{p}}^{\infty} t^{-q/p} \sum_{k=1}^{n} \mathbb{E} \lvert Y_{n,k}'\rvert^{q} \mathrm{d}t \notag \\
&\leqslant 2^{2q - 1} \alpha_{n} \int_{b_{n}^{p}}^{\infty} t^{-q/p} \sum_{k=1}^{n} \left[\mathbb{E}\lvert X_{n,k} \rvert^{q} I_{\left\{\abs{X_{n,k}} \leqslant t^{1/p} \right\}} + t^{q/p} \mathbb{P} \left\{\abs{X_{n,k}} > t^{1/p} \right\} \right] \mathrm{d}t \label{eq:2.4} \\
&= 2^{2q - 1} q \alpha_{n} \sum_{k=1}^{n} \int_{b_{n}^{p}}^{\infty} t^{-q/p} \int_{0}^{t^{1/p}} s^{q-1} \, \mathbb{P} \left\{\abs{X_{n,k}} > s \right\} \mathrm{d}s \, \mathrm{d}t \notag \\
&= 2^{2q - 1} q \alpha_{n} \sum_{k=1}^{n} \int_{0}^{\infty} s^{q-1} \, \mathbb{P} \left\{\abs{X_{n,k}} > s \right\} \int_{\max\left(b_{n}^{p}, s^{p} \right)}^{\infty} t^{-q/p} \mathrm{d}t \, \mathrm{d}s \notag \\
&= 2^{2q - 1} q \alpha_{n} \sum_{k=1}^{n} \left[\frac{p b_{n}^{p - q}}{q - p} \int_{0}^{b_{n}} s^{q-1} \, \mathbb{P} \left\{\abs{X_{n,k}} > s \right\} \mathrm{d}s + \frac{p}{q - p} \int_{b_{n}}^{\infty} s^{p-1} \mathbb{P} \left\{\abs{X_{n,k}} > s \right\} \, \mathrm{d}s \right] \notag \\
&= \frac{p \, 2^{2q - 1} \alpha_{n} b_{n}^{p - q}}{q - p} \sum_{k=1}^{n} \int_{0}^{b_{n}^{q}} \mathbb{P} \left\{\abs{X_{n,k}} > t^{1/q} \right\} \mathrm{d}t + \frac{q \, 2^{2q - 1} \alpha_{n}}{q - p} \sum_{k=1}^{n} \int_{b_{n}^{p}}^{\infty} \mathbb{P} \left\{\abs{X_{n,k}} > t^{1/p} \right\} \, \mathrm{d}t. \notag
\end{align}
On the other hand, $\big \lvert Y_{n,k}'' \big \rvert \leqslant \abs{X_{n,k}} I_{\left\{\abs{X_{n,k}} > t^{1/p} \right\}}$, we obtain for every $p > 1$
\begin{equation}\label{eq:2.5}
\begin{split}
\int_{b_{n}^{p}}^{\infty} \mathbb{P} & \left\{\abs{\sum_{k=1}^{n} \left(Y_{n,k}'' - \mathbb{E} \, Y_{n,k}'' \right)} > \frac{t^{1/p}}{2} \right\} \mathrm{d}t \\
&\leqslant 4 \int_{b_{n}^{p}}^{\infty} t^{-1/p} \sum_{k=1}^{n} \mathbb{E} \abs{Y_{n,k}''} \mathrm{d}t \\
&\leqslant 4 \sum_{k=1}^{n} \int_{b_{n}^{p}}^{\infty} t^{-1/p} \mathbb{E} \abs{X_{n,k}} I_{\left\{\abs{X_{n,k}} > t^{1/p} \right\}} \mathrm{d}t \\
&= 4 \sum_{k=1}^{n} \left(\int_{b_{n}^{p}}^{\infty} t^{-1/p} \int_{t^{1/p}}^{\infty} \mathbb{P} \left\{\abs{X_{n,k}} > s \right\} \mathrm{d}s \, \mathrm{d}t + \int_{b_{n}^{p}}^{\infty} \mathbb{P} \left\{\abs{X_{n,k}} > t^{1/p} \right\} \mathrm{d}t \right) \\
&= 4 \sum_{k=1}^{n} \left(\int_{b_{n}}^{\infty} \mathbb{P} \left\{\abs{X_{n,k}} > s \right\} \int_{b_{n}^{p}}^{s^{p}} t^{-1/p} \mathrm{d}t \, \mathrm{d}s + \int_{b_{n}^{p}}^{\infty} \mathbb{P} \left\{\abs{X_{n,k}} > t^{1/p} \right\} \mathrm{d}t \right) \\
&\leqslant 4 \sum_{k=1}^{n} \left(\frac{p}{p - 1}\int_{b_{n}}^{\infty} s^{p-1} \mathbb{P} \left\{\abs{X_{n,k}} > s \right\} \mathrm{d}s + \int_{b_{n}^{p}}^{\infty} \mathbb{P} \left\{\abs{X_{n,k}} > t^{1/p} \right\} \mathrm{d}t \right) \\
&= \frac{4p}{p - 1} \sum_{k=1}^{n} \int_{b_{n}^{p}}^{\infty} \mathbb{P} \left\{\abs{X_{n,k}} > t^{1/p} \right\} \mathrm{d}t.
\end{split}
\end{equation}
Thus, by gathering \eqref{eq:2.2}, \eqref{eq:2.3}, \eqref{eq:2.4} and \eqref{eq:2.5} we get
\begin{equation} \label{eq:2.6}
\begin{split}
& \sum_{n=1}^{\infty} c_{n} \mathbb{E} \left[\frac{\abs{\sum_{k=1}^{n} (X_{n,k} - \mathbb{E} \, X_{n,k})}}{b_{n}} - \varepsilon \right]_{+}^{p} \\
& \quad = \sum_{n=1}^{\infty} \frac{c_{n}}{b_{n}^{p}} \mathbb{E} \left[\abs{\sum_{k=1}^{n} (X_{n,k} - \mathbb{E} \, X_{n,k})} - \varepsilon b_{n} \right]_{+}^{p} \\
& \quad \leqslant \sum_{n=1}^{\infty}  \left(c_{n} \mathbb{P} \left\{\abs{\sum_{k=1}^{n} (X_{n,k} - \mathbb{E} \, X_{n,k})} > \varepsilon b_{n} \right\} + \frac{c_{n}}{b_{n}^{p}} \int_{b_{n}^{p}}^{\infty} \mathbb{P} \left\{\abs{\sum_{k=1}^{n} (X_{n,k} - \mathbb{E} \, X_{n,k})} > t^{1/p} \right\} \mathrm{d}t \right) \\
&\quad \leqslant \left(\frac{2^{2q - 1}}{\varepsilon^{q}} + \frac{p 2^{2q - 1}}{q - p} \right) \sum_{n=1}^{\infty} \sum_{k=1}^{n} \frac{\alpha_{n} c_{n}}{b_{n}^{q}} \int_{0}^{b_{n}^{q}} \mathbb{P} \left\{\abs{X_{n,k}}^{q} > t \right\} \mathrm{d}t + \frac{4}{\varepsilon} \sum_{n=1}^{\infty} \sum_{k=1}^{n} \frac{c_{n}}{b_{n}} \mathbb{E} \, \lvert X_{n,k} \rvert I_{\left\{\lvert X_{n,k} \rvert > b_{n} \right\}} \\
& \qquad + \left(\frac{q \, 2^{2q - 1}}{q - p} + \frac{4p}{p - 1} \right) \sum_{n=1}^{\infty} \sum_{k=1}^{n} \frac{(1 + \alpha_{n}) c_{n}}{b_{n}^{p}} \int_{b_{n}^{p}}^{\infty} \mathbb{P} \left\{\abs{X_{n,k}}^{p} > t \right\} \, \mathrm{d}t \\
& \quad < \infty,
\end{split}
\end{equation}
according to assumptions (a), (b) and (c). The proof is complete.
\end{proof}

\begin{theorem}\label{thr:2.2}
Let $\left\{X_{n,k}, \, 1 \leqslant k \leqslant n, n \geqslant 1 \right\}$ be an array of random variables satisfying $\mathbb{E} \lvert X_{n,k} \rvert < \infty$ for each $1 \leqslant k \leqslant n$ and all $n \geqslant 1$, and verifying \eqref{eq:2.1} for a $q > 1$ and some sequence $\{\alpha_{n} \}$ of positive numbers. If $\{b_{n} \}$, $\{c_{n} \}$ are real sequences of positive numbers such that \\

\noindent \textnormal{(a)} $\sum_{n=1}^{\infty} \sum_{k=1}^{n} \alpha_{n} c_{n} b_{n}^{-q} \int_{0}^{b_{n}^{q}} \mathbb{P} \left\{\lvert X_{n,k} \rvert^{q} > t \right\} \, \mathrm{d}t < \infty$, \\

\noindent \textnormal{(b)} $\sum_{n=1}^{\infty} \sum_{k=1}^{n} c_{n} \mathbb{E} \, \lvert X_{n,k} \rvert I_{\left\{\lvert X_{n,k} \rvert > b_{n} \right\}}/b_{n} < \infty$, \\

\noindent \textnormal{(c)} $\sum_{n=1}^{\infty} \sum_{k=1}^{n} (\alpha_{n} c_{n}/b_{n}) \int_{b_{n}}^{\infty} \mathbb{P} \left\{\lvert X_{n,k} \rvert > t \right\} \, \mathrm{d}t < \infty$, \\

\noindent then
\begin{equation*}
\sum_{n=1}^{\infty} c_{n} \mathbb{E} \left[\frac{\abs{\sum_{k=1}^{n} (X_{n,k} - \mathbb{E} \, X_{n,k})}}{b_{n}} - \varepsilon \right]_{+} < \infty
\end{equation*}
for all $\varepsilon > 0$.
\end{theorem}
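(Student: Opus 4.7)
The plan is to adapt the strategy of Theorem~\ref{thr:2.1}, whose argument for $p>1$ breaks at $p=1$ precisely in \eqref{eq:2.5} (the factor $p/(p-1)$ diverges). Writing $S_n=\sum_{k=1}^n(X_{n,k}-\mathbb{E}\,X_{n,k})$, I would start from
\begin{equation*}
\mathbb{E}\left[\abs{S_n}-\varepsilon b_n\right]_+ = \int_0^\infty \mathbb{P}\left\{\abs{S_n}>\varepsilon b_n+t\right\}\mathrm{d}t \leqslant b_n\mathbb{P}\left\{\abs{S_n}>\varepsilon b_n\right\}+\int_{b_n}^\infty\mathbb{P}\left\{\abs{S_n}>t\right\}\mathrm{d}t.
\end{equation*}
The first summand is treated exactly as in \eqref{eq:2.3} (truncate at level $b_n$, then Chebyshev with exponents $q$ and $1$ together with \eqref{eq:2.1}), producing contributions controlled by (a) and (b). For the second summand I would introduce the variable truncation $Y_{n,k}'=g_t(X_{n,k})$, $Y_{n,k}''=X_{n,k}-Y_{n,k}'$ with corresponding centred sums $T_n'$, $T_n''$, and split $\mathbb{P}\{\abs{S_n}>t\}\leqslant\mathbb{P}\{\abs{T_n'}>t/2\}+\mathbb{P}\{\abs{T_n''}>t/2\}$. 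The $T_n'$ part is handled as in \eqref{eq:2.4}; since $\int_{\max(b_n,v)}^\infty t^{-q}\,\mathrm{d}t$ still converges for $q>1$, the same Fubini computation delivers an (a)-type term together with $(\alpha_n c_n/b_n)\int_{b_n}^\infty\mathbb{P}\{\abs{X_{n,k}}>s\}\,\mathrm{d}s$, which is exactly (c).

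The main obstacle is the $T_n''$ term, because the $L^1$-Chebyshev bound used in \eqref{eq:2.5} now produces $\int_{b_n}^\infty t^{-1}\mathbb{E}\abs{X_{n,k}}I_{\{\abs{X_{n,k}}>t\}}\,\mathrm{d}t$, which diverges logarithmically and is not controlled by (a)--(c). I would sidestep this by noting that on the event $\{\forall k:\abs{X_{n,k}}\leqslant t\}$ each $Y_{n,k}''(t)$ vanishes, so there $T_n''=-\sum_k\mathbb{E}\,Y_{n,k}''(t)$ is deterministic; this yields the inclusion
\begin{equation*}
\left\{\abs{T_n''}>t/2\right\}\subseteq\left\{\exists k:\abs{X_{n,k}}>t\right\}\cup\left\{\left|\sum_k\mathbb{E}\,Y_{n,k}''(t)\right|>t/2\right\}.
\end{equation*}
The first event contributes at most $\sum_k\mathbb{P}\{\abs{X_{n,k}}>t\}$, whose integral over $t\geqslant b_n$ equals $\sum_k\mathbb{E}(\abs{X_{n,k}}-b_n)_+\leqslant\sum_k\mathbb{E}\abs{X_{n,k}}I_{\{\abs{X_{n,k}}>b_n\}}$, summable against $c_n/b_n$ via (b). The second event is deterministic; setting $F_n(t):=\sum_k\int_t^\infty\mathbb{P}\{\abs{X_{n,k}}>s\}\,\mathrm{d}s$, which is non-increasing in $t$ and bounds $\left|\sum_k\mathbb{E}\,Y_{n,k}''(t)\right|$, one observes that $F_n(t)>t/2$ forces $t<2F_n(t)\leqslant 2F_n(b_n)$, so the Lebesgue measure of $\{t\geqslant b_n:F_n(t)>t/2\}$ is at most $2F_n(b_n)=2\sum_k\int_{b_n}^\infty\mathbb{P}\{\abs{X_{n,k}}>s\}\,\mathrm{d}s$, again summable via (b). Recognising that this monotonicity trick replaces the divergent $\int t^{-1}\,\mathrm{d}t$ step is the crucial point.

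Multiplying each of the four contributions by $c_n/b_n$ and summing over $n$ then yields $\sum_n c_n\mathbb{E}[\abs{S_n}/b_n-\varepsilon]_+<\infty$ via (a), (b) and (c), as required.
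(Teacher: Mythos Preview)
Your argument is correct. It follows the paper's proof exactly for the first summand and for the $T_n'$ part of the second summand; the only genuine difference is in how you dispose of the $T_n''$ term.

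The paper avoids your split-into-two-events device altogether. It simply observes that for $t\geqslant b_n$,
\[
\left|\sum_{k=1}^n\bigl(Y_{n,k}''-\mathbb{E}\,Y_{n,k}''\bigr)\right|
\leqslant \sum_{k=1}^n\bigl(|X_{n,k}|I_{\{|X_{n,k}|>t\}}+\mathbb{E}|X_{n,k}|I_{\{|X_{n,k}|>t\}}\bigr)
\leqslant \sum_{k=1}^n\bigl(|X_{n,k}|I_{\{|X_{n,k}|>b_n\}}+\mathbb{E}|X_{n,k}|I_{\{|X_{n,k}|>b_n\}}\bigr)=:Z_n,
\]
a bound that no longer depends on $t$. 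Then $\int_{b_n}^\infty\mathbb{P}\{Z_n>t/2\}\,\mathrm{d}t\leqslant 2\,\mathbb{E}\,Z_n=4\sum_k\mathbb{E}|X_{n,k}|I_{\{|X_{n,k}|>b_n\}}$, which is controlled by (b). Your route reaches the same destination: the union bound on $\{\exists k:|X_{n,k}|>t\}$ and the Lebesgue-measure estimate on the deterministic set $\{t\geqslant b_n:F_n(t)>t/2\}$ both collapse to quantities dominated by $\sum_k\mathbb{E}|X_{n,k}|I_{\{|X_{n,k}|>b_n\}}$. The paper's trick is shorter and buys a cleaner constant; your decomposition is a bit more laboured but has the minor virtue of separating the random overshoot from the centring contribution, which can be useful when one wants to track the two sources independently.
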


\begin{proof}
All steps in the proof of Theorem~\ref{thr:2.1} remains true for $p = 1$ except the upper bound \eqref{eq:2.5}. Supposing $Y_{n,k}' := g_{t}(X_{n,k})$ and $Y_{n,k}'' = X_{n,k} - Y_{n,k}'$ we have, for any $t \geqslant b_{n}$,
\begin{align*}
\abs{\sum_{k=1}^{n} \left(Y_{n,k}'' - \mathbb{E} \, Y_{n,k}'' \right)} &\leqslant \sum_{k=1}^{n} \left(\lvert Y_{n,k}'' \rvert + \mathbb{E} \lvert Y_{n,k}'' \rvert \right) \\
&\leqslant \sum_{k=1}^{n} \left(\abs{X_{n,k}} I_{\left\{\abs{X_{n,k}} > t \right\}} + \mathbb{E} \abs{X_{n,k}} I_{\left\{\abs{X_{n,k}} > t \right\}} \right) \\
&\leqslant \sum_{k=1}^{n} \left(\abs{X_{n,k}} I_{\left\{\abs{X_{n,k}} > b_{n} \right\}} + \mathbb{E} \abs{X_{n,k}} I_{\left\{\abs{X_{n,k}} > b_{n} \right\}} \right).
\end{align*}
Hence,
\begin{equation}\label{eq:2.7}
\begin{split}
\int_{b_{n}}^{\infty} \mathbb{P} & \left\{\abs{\sum_{k=1}^{n} \left(Y_{n,k}'' - \mathbb{E} \, Y_{n,k}'' \right)} > \frac{t}{2} \right\} \mathrm{d}t \\
&\leqslant \int_{b_{n}}^{\infty} \mathbb{P} \left\{\sum_{k=1}^{n} \left(\abs{X_{n,k}} I_{\left\{\abs{X_{n,k}} > b_{n} \right\}} + \mathbb{E} \abs{X_{n,k}} I_{\left\{\abs{X_{n,k}} > b_{n} \right\}} \right) > \frac{t}{2} \right\} \mathrm{d}t \\
&= 2 \int_{\frac{b_{n}}{2}}^{\infty} \mathbb{P} \left\{\sum_{k=1}^{n} \left(\abs{X_{n,k}} I_{\left\{\abs{X_{n,k}} > b_{n} \right\}} + \mathbb{E} \abs{X_{n,k}} I_{\left\{\abs{X_{n,k}} > b_{n} \right\}} \right) > s \right\} \mathrm{d}s \\
&\leqslant 2 \, \mathbb{E} \left[\sum_{k=1}^{n} \left(\abs{X_{n,k}} I_{\left\{\abs{X_{n,k}} > b_{n} \right\}} + \mathbb{E} \abs{X_{n,k}} I_{\left\{\abs{X_{n,k}} > b_{n} \right\}} \right) \right] \\
&=4 \sum_{k=1}^{n} \mathbb{E} \abs{X_{n,k}} I_{\left\{\abs{X_{n,k}} > b_{n} \right\}}
\end{split}
\end{equation}
and
\begin{align*}
& c_{n} \mathbb{E} \left[\frac{\abs{\sum_{k=1}^{n} (X_{n,k} - \mathbb{E} \, X_{n,k})}}{b_{n}} - \varepsilon \right]_{+} \\
&\quad = \frac{c_{n}}{b_{n}} \mathbb{E} \left[\abs{\sum_{k=1}^{n} (X_{n,k} - \mathbb{E} \, X_{n,k})} - \varepsilon b_{n} \right]_{+} \\
&\quad \leqslant \left(\frac{2^{2q - 1}}{\varepsilon^{q}} + \frac{2^{2q - 1}}{q - 1} \right) \sum_{k=1}^{n} \frac{\alpha_{n} c_{n}}{b_{n}^{q}} \int_{0}^{b_{n}^{q}} \mathbb{P} \left\{\abs{X_{n,k}}^{q} > t \right\} \mathrm{d}t + \left(\frac{4}{\varepsilon} + 4 \right) \sum_{k=1}^{n} \frac{c_{n}}{b_{n}} \mathbb{E} \, \lvert X_{n,k} \rvert I_{\left\{\lvert X_{n,k} \rvert > b_{n} \right\}} \\
& \qquad + \frac{q \, 2^{2q - 1}}{q - 1} \sum_{k=1}^{n} \frac{\alpha_{n} c_{n}}{b_{n}} \int_{b_{n}}^{\infty} \mathbb{P} \left\{\abs{X_{n,k}} > t \right\} \, \mathrm{d}t
\end{align*}
by employing \eqref{eq:2.2}, \eqref{eq:2.4} with $p = 1$ and \eqref{eq:2.3}, \eqref{eq:2.7}. The thesis is established.
\end{proof}

The next two results, give us conditions for the convergence of \eqref{eq:1.3} under the assumption that, for every $t > 0$, the array of random variables $\left\{g_{t}(X_{n,k}), \, 1 \leqslant k \leqslant n, n \geqslant 1 \right\}$ satisfies a Rosenthal type inequality. Specifically, we shall admit that there are sequences of positive numbers $\{\beta_{n} \}$ and $\{\xi_{n} \}$ such that for some $q > 2$,
\begin{equation}\label{eq:2.8}
\mathbb{E} \abs{\sum_{k=1}^{n} \left[g_{t}(X_{n,k}) - \mathbb{E} \, g_{t}(X_{n,k}) \right]}^{q} \leqslant \beta_{n} \sum_{k=1}^{n} \mathbb{E} \, \lvert g_{t}(X_{n,k}) \rvert^{q} + \xi_{n} \left[\sum_{k=1}^{n} \mathbb{E} \, \lvert g_{t}(X_{n,k}) \rvert^{2} \right]^{q/2}
\end{equation}
for all $n \geqslant 1$ and $t > 0$.

\begin{theorem}\label{thr:2.3}
Let $p > 1$, $\left\{X_{n,k}, \, 1 \leqslant k \leqslant n, n \geqslant 1 \right\}$ be an array of random variables satisfying $\mathbb{E} \lvert X_{n,k} \rvert^{p} < \infty$ for each $1 \leqslant k \leqslant n$ and all $n \geqslant 1$, and verifying \eqref{eq:2.8} for a $q > \max\{p,2\}$ and some sequences $\{\beta_{n} \}$, $\{\xi_{n} \}$ of positive numbers. If $\{b_{n} \}$, $\{c_{n} \}$ are real sequences of positive numbers such that \\

\noindent \textnormal{(a)} $\sum_{n=1}^{\infty} \sum_{k=1}^{n} \beta_{n} c_{n} b_{n}^{-q} \int_{0}^{b_{n}^{q}} \mathbb{P} \left\{\lvert X_{n,k} \rvert^{q} > t \right\} \, \mathrm{d}t < \infty$, \\

\noindent \textnormal{(b)} $\sum_{n=1}^{\infty} \xi_{n} c_{n} b_{n}^{-p} \int_{0}^{b_{n}^{p - q}} \left(\sum_{k=1}^{n} \int_{0}^{t^{2/(p - q)}} \mathbb{P} \{X_{n,k}^{2} > s \} \mathrm{d}s \right)^{q/2} \mathrm{d}t < \infty$, \\

\noindent \textnormal{(c)} $\sum_{n=1}^{\infty} \xi_{n} c_{n} b_{n}^{-q} \left(\sum_{k=1}^{n} \int_{0}^{b_{n}^{2}} \mathbb{P} \{X_{n,k}^{2} > t \} \, \mathrm{d}t \right)^{q/2} < \infty$, \\

\noindent \textnormal{(d)} $\sum_{n=1}^{\infty} \sum_{k=1}^{n} c_{n} \mathbb{E} \, \lvert X_{n,k} \rvert I_{\left\{\lvert X_{n,k} \rvert > b_{n} \right\}}/b_{n} < \infty$, \\

\noindent \textnormal{(e)} $\sum_{n=1}^{\infty} \sum_{k=1}^{n} (1 + \beta_{n}) c_{n} b_{n}^{-p} \int_{b_{n}^{p}}^{\infty} \mathbb{P} \left\{\lvert X_{n,k} \rvert^{p} > t \right\} \, \mathrm{d}t < \infty$, \\

\noindent then
\begin{equation*}
\sum_{n=1}^{\infty} c_{n} \mathbb{E} \left[\frac{\abs{\sum_{k=1}^{n} (X_{n,k} - \mathbb{E} \, X_{n,k})}}{b_{n}} - \varepsilon \right]_{+}^{p} < \infty
\end{equation*}
for all $\varepsilon > 0$.
\end{theorem}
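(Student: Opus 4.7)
My plan is to run the proof of Theorem~\ref{thr:2.1} verbatim, replacing each invocation of the von Bahr--Esseen inequality \eqref{eq:2.1} by the Rosenthal-type inequality \eqref{eq:2.8}, and then to show that the two extra ``second-moment'' terms produced by \eqref{eq:2.8} are precisely what the new assumptions (b) and (c) control. Writing $S_n := \sum_{k=1}^n (X_{n,k} - \mathbb{E}\, X_{n,k})$, I keep the decomposition \eqref{eq:2.2}, the two truncations $X_{n,k}' := g_{b_n}(X_{n,k})$ (for the first summand) and $Y_{n,k}' := g_{t^{1/p}}(X_{n,k})$ (for the second), a Chebyshev--Markov estimate of order $q$ applied to the primed parts, and the $L^{1}$-type bound \eqref{eq:2.5} for the unbounded parts.

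The pieces of the resulting bounds that involve $\beta_n$ are formally identical to the $\alpha_n$-pieces appearing in \eqref{eq:2.3} and \eqref{eq:2.4}, so, combined with \eqref{eq:2.5}, they reproduce assumptions (a), (d) and (e) through exactly the same manipulations as in Theorem~\ref{thr:2.1} (with $\alpha_n$ replaced by $\beta_n$). The additional $\xi_n$-term arising from the first summand $b_n^p\,\mathbb{P}\{\abs{S_n} > \varepsilon b_n\}$ reads
\begin{equation*}
\frac{2^q \xi_n}{\varepsilon^q b_n^q}\left(\sum_{k=1}^n \mathbb{E}\abs{X_{n,k}'}^{2}\right)^{q/2} = \frac{2^q \xi_n}{\varepsilon^q b_n^q}\left(\sum_{k=1}^n \int_{0}^{b_n^{2}} \mathbb{P}\{X_{n,k}^{2} > s\}\,\mathrm{d}s\right)^{q/2},
\end{equation*}
and after multiplication by $c_n$ and summation over $n$ this is exactly the series in assumption (c).

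The step I expect to be the main obstacle is the $\xi_n$-contribution coming from the integral $\int_{b_n^p}^{\infty} \mathbb{P}\{\abs{\sum_k (Y_{n,k}' - \mathbb{E}\,Y_{n,k}')} > t^{1/p}/2\}\,\mathrm{d}t$. Chebyshev combined with \eqref{eq:2.8}, together with the identity $\mathbb{E}\abs{Y_{n,k}'}^{2} = \int_0^{t^{2/p}}\mathbb{P}\{X_{n,k}^{2} > s\}\,\mathrm{d}s$, yields the extra term
\begin{equation*}
2^q \xi_n \int_{b_n^p}^{\infty} t^{-q/p}\left(\sum_{k=1}^n \int_{0}^{t^{2/p}}\mathbb{P}\{X_{n,k}^{2} > s\}\,\mathrm{d}s\right)^{q/2}\mathrm{d}t,
\end{equation*}
and my plan is to handle it by the substitution $v = t^{(p-q)/p}$. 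Since $q > p$ the exponent is negative, $[b_n^p,\infty)$ maps bijectively onto $(0, b_n^{p-q}]$, and a direct computation (using $t^{2/p} = v^{2/(p-q)}$ and $t^{-q/p}\,\mathrm{d}t = \tfrac{p}{p-q}\,\mathrm{d}v$) converts the above integral into $\tfrac{p}{q-p}\int_{0}^{b_n^{p-q}} \bigl(\sum_k \int_{0}^{v^{2/(p-q)}}\mathbb{P}\{X_{n,k}^{2} > s\}\,\mathrm{d}s\bigr)^{q/2}\mathrm{d}v$, which is precisely the series in assumption (b). Once this substitution is in place, assembling all five contributions in the same spirit as \eqref{eq:2.6} delivers the claimed convergence; apart from setting up the right change of variables, every remaining estimate is a cosmetic variant of one already carried out in the proof of Theorem~\ref{thr:2.1}.
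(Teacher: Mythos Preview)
Your proposal is correct and matches the paper's proof essentially line for line: the paper also states that the argument is that of Theorem~\ref{thr:2.1} with the bounds \eqref{eq:2.3} and \eqref{eq:2.4} replaced by their Rosenthal analogues, it writes the extra $\xi_n$-term from the probability part exactly as $\xi_n b_n^{-q}\bigl(\sum_k\int_0^{b_n^2}\mathbb{P}\{X_{n,k}^2>s\}\,\mathrm{d}s\bigr)^{q/2}$ (assumption~(c)), and it handles the $\xi_n$-integral by the same substitution $v=t^{(p-q)/p}$ to obtain $\tfrac{p}{q-p}\int_0^{b_n^{p-q}}\bigl(\sum_k\int_0^{v^{2/(p-q)}}\mathbb{P}\{X_{n,k}^2>s\}\,\mathrm{d}s\bigr)^{q/2}\mathrm{d}v$ (assumption~(b)).
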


\begin{proof}
The proof follows in exactly the same manner as the proof of Theorem~\ref{thr:2.1} except for upper bounds \eqref{eq:2.3} and \eqref{eq:2.4} which must be replaced. Letting $X_{n,k}' := g_{b_{n}}(X_{n,k})$ and $X_{n,k}'' = X_{n,k} - X_{n,k}'$, assumption \eqref{eq:2.8} ensures
\begin{align}
& \mathbb{P} \left\{\abs{\sum_{k=1}^{n} (X_{n,k} - \mathbb{E} \, X_{n,k})} > \varepsilon b_{n} \right\} \notag \\
& \; \; \leqslant \mathbb{P} \left\{\abs{\sum_{k=1}^{n} (X_{n,k}' - \mathbb{E} \, X_{n,k}')} > \frac{\varepsilon b_{n}}{2} \right\} + \mathbb{P} \left\{\abs{\sum_{k=1}^{n} (X_{n,k}'' - \mathbb{E} \, X_{n,k}'')} > \frac{\varepsilon b_{n}}{2} \right\} \notag \\
& \; \; \leqslant \frac{2^{q}}{\varepsilon^{q} b_{n}^{q}} \mathbb{E} \abs{\sum_{k=1}^{n} (X_{n,k}' - \mathbb{E} \, X_{n,k}')}^{q} + \frac{2}{\varepsilon b_{n}} \mathbb{E} \abs{\sum_{k=1}^{n} (X_{n,k}'' - \mathbb{E} \, X_{n,k}'')} \notag \\
& \; \; \leqslant \frac{2^{q} \beta_{n}}{\varepsilon^{q} b_{n}^{q}} \sum_{k=1}^{n} \mathbb{E} \, \lvert X_{n,k}' \rvert^{q} + \frac{2^{q} \xi_{n}}{\varepsilon^{q} b_{n}^{q}} \left(\sum_{k=1}^{n} \mathbb{E} \, \lvert X_{n,k}' \rvert^{2} \right)^{q/2} + \frac{4}{\varepsilon b_{n}} \sum_{k=1}^{n} \mathbb{E} \, \lvert X_{n,k}'' \rvert \label{eq:2.9} \\
& \; \; \leqslant \frac{2^{2q - 1} \beta_{n}}{\varepsilon^{q} b_{n}^{q}} \sum_{k=1}^{n} \left[\mathbb{E} \, \lvert X_{n,k} \rvert^{q} I_{\left\{\lvert X_{n,k} \rvert \leqslant b_{n} \right\}} + b_{n}^{q} \mathbb{P} \left\{\lvert X_{n,k} \rvert > b_{n} \right\} \right] + \frac{4}{\varepsilon b_{n}} \sum_{k=1}^{n} \mathbb{E} \, \lvert X_{n,k} \rvert I_{\left\{\lvert X_{n,k} \rvert > b_{n} \right\}} \notag \\
& \; \; \qquad + \frac{2^{3q/2} \xi_{n}}{\varepsilon^{q} b_{n}^{q}} \left(\sum_{k=1}^{n} \mathbb{E} \, X_{n,k}^{2} I_{\left\{\lvert X_{n,k} \rvert \leqslant b_{n} \right\}} + b_{n}^{2} \sum_{k=1}^{n} \mathbb{P} \left\{\lvert X_{n,k} \rvert > b_{n} \right\} \right)^{q/2} \notag \\
& \; \; = \frac{2^{2q - 1} \beta_{n}}{\varepsilon^{q} b_{n}^{q}} \sum_{k=1}^{n} \int_{0}^{b_{n}^{q}} \mathbb{P} \left\{\lvert X_{n,k} \rvert^{q} > t \right\} \mathrm{d}t + \frac{4}{\varepsilon b_{n}} \sum_{k=1}^{n} \mathbb{E} \, \lvert X_{n,k} \rvert I_{\left\{\lvert X_{n,k} \rvert > b_{n} \right\}} \notag \\
& \; \; \qquad + \frac{2^{3q/2} \xi_{n}}{\varepsilon^{q} b_{n}^{q}} \left(\sum_{k=1}^{n} \int_{0}^{b_{n}^{2}} \mathbb{P} \left\{X_{n,k}^{2} > t \right\} \, \mathrm{d}t \right)^{q/2} \notag.
\end{align}
On the other hand, considering $Y_{n,k}' := g_{t^{1/p}}(X_{n,k})$ we have
\begin{equation}\label{eq:2.10}
\begin{split}
& \int_{b_{n}^{p}}^{\infty} \mathbb{P} \left\{\abs{\sum_{k=1}^{n} \left(Y_{n,k}' - \mathbb{E} \, Y_{n,k}' \right)} > \frac{t^{1/p}}{2} \right\} \mathrm{d}t \\
& \; \; \leqslant 2^{q} \int_{b_{n}^{p}}^{\infty} t^{-q/p} \mathbb{E} \abs{\sum_{k=1}^{n} \left(Y_{n,k}' - \mathbb{E} \, Y_{n,k}' \right)}^{q} \mathrm{d}t \\
& \; \; \leqslant 2^{q} \beta_{n} \int_{b_{n}^{p}}^{\infty} t^{-q/p} \sum_{k=1}^{n} \mathbb{E} \lvert Y_{n,k}'\rvert^{q} \mathrm{d}t + 2^{q} \xi_{n} \int_{b_{n}^{p}}^{\infty} t^{-q/p} \left(\sum_{k=1}^{n} \mathbb{E} \lvert Y_{n,k}'\rvert^{2} \right)^{q/2} \mathrm{d}t \\
& \; \; \leqslant \frac{p \, 2^{2q - 1} \beta_{n} b_{n}^{p - q}}{q - p} \sum_{k=1}^{n} \int_{0}^{b_{n}^{q}} \mathbb{P} \left\{\abs{X_{n,k}} > t^{1/q} \right\} \mathrm{d}t + \frac{q \, 2^{2q - 1} \beta_{n}}{q - p} \sum_{k=1}^{n} \int_{b_{n}^{p}}^{\infty} \mathbb{P} \left\{\abs{X_{n,k}} > t^{1/p} \right\} \, \mathrm{d}t \\
& \; \; \qquad + 2^{3q/2} \xi_{n} \int_{b_{n}^{p}}^{\infty} t^{-q/p} \left[\sum_{k=1}^{n} \left(\mathbb{E} \, X_{n,k}^{2} I_{\left\{\lvert X_{n,k} \rvert \leqslant t^{1/p} \right\}} + t^{2/p} \mathbb{P} \left\{\abs{X_{n,k}} > t^{1/p} \right\} \right) \right]^{q/2} \mathrm{d}t  \\
& \; \; = \frac{p \, 2^{2q - 1} \beta_{n} b_{n}^{p - q}}{q - p} \sum_{k=1}^{n} \int_{0}^{b_{n}^{q}} \mathbb{P} \left\{\abs{X_{n,k}} > t^{1/q} \right\} \mathrm{d}t + \frac{q \, 2^{2q - 1} \beta_{n}}{q - p} \sum_{k=1}^{n} \int_{b_{n}^{p}}^{\infty} \mathbb{P} \left\{\abs{X_{n,k}} > t^{1/p} \right\} \, \mathrm{d}t \\
& \; \; \qquad + 2^{3q/2} \xi_{n} \int_{b_{n}^{p}}^{\infty} t^{-q/p} \left(\int_{0}^{t^{2/p}} \sum_{k=1}^{n} \mathbb{P} \left\{X_{n,k}^{2} > s \right\} \mathrm{d}s \right)^{q/2} \mathrm{d}t \\
& \; \; = \frac{p \, 2^{2q - 1} \beta_{n} b_{n}^{p - q}}{q - p} \sum_{k=1}^{n} \int_{0}^{b_{n}^{q}} \mathbb{P} \left\{\abs{X_{n,k}} > t^{1/q} \right\} \mathrm{d}t + \frac{q \, 2^{2q - 1} \beta_{n}}{q - p} \sum_{k=1}^{n} \int_{b_{n}^{p}}^{\infty} \mathbb{P} \left\{\abs{X_{n,k}} > t^{1/p} \right\} \, \mathrm{d}t \\
& \; \; \qquad + \frac{p 2^{3q/2} \xi_{n}}{q - p} \int_{0}^{b_{n}^{p - q}} \left(\sum_{k=1}^{n} \int_{0}^{v^{2/(p - q)}} \mathbb{P} \left\{X_{n,k}^{2} > s \right\} \mathrm{d}s \right)^{q/2} \mathrm{d}v.
\end{split}
\end{equation}
Employing \eqref{eq:2.2}, \eqref{eq:2.5}, \eqref{eq:2.9} and \eqref{eq:2.10} as in \eqref{eq:2.6} the conclusion follows. The proof is complete.
\end{proof}

\begin{theorem}\label{thr:2.4}
Let $\left\{X_{n,k}, \, 1 \leqslant k \leqslant n, n \geqslant 1 \right\}$ be an array of random variables satisfying $\mathbb{E} \lvert X_{n,k} \rvert < \infty$ for each $1 \leqslant k \leqslant n$ and all $n \geqslant 1$, and verifying \eqref{eq:2.8} for a $q > 2$ and some sequences $\{\beta_{n} \}$, $\{\xi_{n} \}$ of positive numbers. If $\{b_{n} \}$, $\{c_{n} \}$ are real sequences of positive numbers such that \\

\noindent \textnormal{(a)} $\sum_{n=1}^{\infty} \sum_{k=1}^{n} \beta_{n} c_{n} b_{n}^{-q} \int_{0}^{b_{n}^{q}} \mathbb{P} \left\{\lvert X_{n,k} \rvert^{q} > t \right\} \, \mathrm{d}t < \infty$, \\

\noindent \textnormal{(b)} $\sum_{n=1}^{\infty} (\xi_{n} c_{n}/b_{n}) \int_{0}^{b_{n}^{1 - q}} \left(\sum_{k=1}^{n} \int_{0}^{t^{2/(1 - q)}} \mathbb{P} \{X_{n,k}^{2} > s \} \mathrm{d}s \right)^{q/2} \mathrm{d}t < \infty$ \\

\noindent \textnormal{(c)} $\sum_{n=1}^{\infty} \xi_{n} c_{n} b_{n}^{-q} \left(\sum_{k=1}^{n} \int_{0}^{b_{n}^{2}} \mathbb{P} \{X_{n,k}^{2} > t \} \, \mathrm{d}t \right)^{q/2} < \infty$, \\

\noindent \textnormal{(d)} $\sum_{n=1}^{\infty} \sum_{k=1}^{n} c_{n} \mathbb{E} \, \lvert X_{n,k} \rvert I_{\left\{\lvert X_{n,k} \rvert > b_{n} \right\}}/b_{n} < \infty$, \\

\noindent \textnormal{(e)} $\sum_{n=1}^{\infty} \sum_{k=1}^{n} (\beta_{n} c_{n}/b_{n}) \int_{b_{n}}^{\infty} \mathbb{P} \left\{\lvert X_{n,k} \rvert > t \right\} \, \mathrm{d}t < \infty$, \\

\noindent then
\begin{equation*}
\sum_{n=1}^{\infty} c_{n} \mathbb{E} \left[\frac{\abs{\sum_{k=1}^{n} (X_{n,k} - \mathbb{E} \, X_{n,k})}}{b_{n}} - \varepsilon \right]_{+} < \infty
\end{equation*}
for all $\varepsilon > 0$.
\end{theorem}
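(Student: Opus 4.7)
The plan is to imitate the proof of Theorem~\ref{thr:2.3} with $p = 1$, while borrowing from Theorem~\ref{thr:2.2} the one ingredient that does not survive the limit $p \downarrow 1$, namely the upper bound \eqref{eq:2.5} (whose constant $4p/(p-1)$ blows up). Concretely, I would begin from the identity \eqref{eq:2.2} specialized to $p = 1$, which splits the quantity of interest into the two pieces $b_{n}\mathbb{P}\{|\sum_{k}(X_{n,k}-\mathbb{E}\,X_{n,k})| > \varepsilon b_{n}\}$ and $\int_{b_{n}}^{\infty}\mathbb{P}\{|\sum_{k}(X_{n,k}-\mathbb{E}\,X_{n,k})| > t\}\,\mathrm{d}t$.

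For the first piece I would reuse the Rosenthal-based truncation estimate \eqref{eq:2.9} from the proof of Theorem~\ref{thr:2.3}: after splitting $X_{n,k} = X_{n,k}' + X_{n,k}''$ at level $b_{n}$ and applying \eqref{eq:2.8} to the bounded summands together with a first-moment Markov bound to the unbounded ones, the resulting four terms get absorbed into assumptions (a), (c), (d) (and the residual $\beta_{n}\mathbb{P}\{|X_{n,k}|>b_{n}\}$ into (a) or (e)). For the second piece I would again split $X_{n,k} = Y_{n,k}' + Y_{n,k}''$, this time with $Y_{n,k}' = g_{t}(X_{n,k})$, treating the two contributions separately.

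The bounded part $Y_{n,k}'$ is handled exactly as in \eqref{eq:2.10} with $p = 1$: the $\beta_{n}$-integrals collapse, via Fubini and the identification $\int_{0}^{b_{n}^{q}}\mathbb{P}\{|X_{n,k}|>t^{1/q}\}\,\mathrm{d}t = \int_{0}^{b_{n}^{q}}\mathbb{P}\{|X_{n,k}|^{q}>t\}\,\mathrm{d}t$, into assumptions (a) and (e), while the $\xi_{n}$-term leaves the integral
\[
\int_{b_{n}}^{\infty} t^{-q}\left(\sum_{k=1}^{n}\int_{0}^{t^{2}}\mathbb{P}\{X_{n,k}^{2}>s\}\,\mathrm{d}s\right)^{q/2}\mathrm{d}t,
\]
which I would transform by the substitution $v = t^{1-q}$ (so $t^{-q}\,\mathrm{d}t = -\mathrm{d}v/(q-1)$, $t^{2} = v^{2/(1-q)}$, and the bounds $t = b_{n}$, $t = \infty$ map to $v = b_{n}^{1-q}$, $v = 0$) into exactly the expression appearing in condition (b). For the tail part $Y_{n,k}''$ the bound \eqref{eq:2.5} is unavailable at $p = 1$; I would instead repeat verbatim the argument that produced \eqref{eq:2.7} in the proof of Theorem~\ref{thr:2.2}, using $|Y_{n,k}''| \leqslant |X_{n,k}|\,I_{\{|X_{n,k}|>b_{n}\}}$ uniformly in $t \geqslant b_{n}$, yielding a bound linear in $\sum_{k}\mathbb{E}|X_{n,k}|\,I_{\{|X_{n,k}|>b_{n}\}}$ that is summable against $c_{n}/b_{n}$ by (d).

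The only genuinely non-routine step is the change of variables $v = t^{1-q}$ that realigns the Rosenthal-type mixed-moment contribution with assumption (b); beyond that, the argument is a careful bookkeeping verification that each summand produced by the two decompositions is controlled by one of the five hypotheses, following the template of \eqref{eq:2.6}.
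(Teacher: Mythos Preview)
Your proposal is correct and matches the paper's own proof essentially line by line: the paper states that the result follows from \eqref{eq:2.2}, \eqref{eq:2.10} with $p=1$, \eqref{eq:2.7}, and \eqref{eq:2.9}, which is precisely the combination you describe (including the substitution $v=t^{1-q}$, already built into the last line of \eqref{eq:2.10}).
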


\begin{proof}
The thesis is a consequence of \eqref{eq:2.2}, \eqref{eq:2.10} with $p=1$ and \eqref{eq:2.7}, \eqref{eq:2.9}.
\end{proof}

\begin{remark}
Notice that if $\left\{X_{n,k}, \, 1 \leqslant k \leqslant n, n \geqslant 1 \right\}$ is an array of row-wise extended negatively dependent random variables with dominating sequence $\{M_{n}, n \geqslant 1 \}$ (see \cite{Lita16}), then \eqref{eq:2.8} holds with $q \geqslant 2$ and $\beta_{n} = \xi_{n} = C(q) (1 + M_{n})$ with $C(q)$ a positive constant depending only on $q$ (see Lemma 2 of \cite{Lita16}); further, \eqref{eq:2.1} still holds for these dependent structures with $1 \leqslant q \leqslant 2$ and $\alpha_{n} = C(q)(1 + M_{n})$, where $C(q) > 0$ depends only on $q$.
\end{remark}

Supposing $0 < p \leqslant 1$, $\varepsilon > 0$ and $b_{n}$ a real sequence of positive numbers, Lemma 2.1 of \cite{Sung09} and elementary inequality $(x + y)^{p} \leqslant x^{p} + y^{p}$, $x,y \geqslant 0$ lead us to
\begin{equation*}
\mathbb{E} \left(\abs{\sum_{k=1}^{n} X_{n,k} } - \varepsilon b_{n} \right)_{+}^{p} \leqslant \mathbb{E} \left(\abs{\sum_{k=1}^{n} X_{n,k} I_{\left\{\lvert X_{n,k} \rvert \leqslant b_{n} \right\}}} - \varepsilon b_{n} \right)_{+}^{p} + \mathbb{E} \abs{\sum_{k=1}^{n} X_{n,k} I_{\left\{\lvert X_{n,k} \rvert > b_{n} \right\}}}^{p}.
\end{equation*}
By taking $q > p$, we obtain
\begin{align*}
& \mathbb{E} \left(\abs{\sum_{k=1}^{n} X_{n,k} I_{\left\{\lvert X_{n,k} \rvert \leqslant b_{n} \right\}}} - \varepsilon b_{n} \right)_{+}^{p} \\
&\qquad \leqslant b_{n}^{p} \mathbb{P} \left\{\abs{\sum_{k=1}^{n} X_{n,k} I_{\left\{\lvert X_{n,k} \rvert \leqslant b_{n} \right\}}} > \varepsilon b_{n} \right\} + \int_{b_{n}^{p}}^{\infty} \mathbb{P} \left\{\abs{\sum_{k=1}^{n} X_{n,k} I_{\left\{\lvert X_{n,k} \rvert \leqslant b_{n} \right\}}} > t^{1/p} \right\} \mathrm{d}t \\
&\qquad \leqslant \varepsilon^{-q} b_{n}^{p-q} \mathbb{E} \abs{\sum_{k=1}^{n} X_{n,k} I_{\left\{\lvert X_{n,k} \rvert \leqslant b_{n} \right\}}}^{q} + \mathbb{E} \abs{\sum_{k=1}^{n} X_{n,k} I_{\left\{\lvert X_{n,k} \rvert \leqslant b_{n} \right\}}}^{q} \int_{b_{n}^{p}}^{\infty} t^{-q/p} \mathrm{d}t \\
&\qquad = \varepsilon^{-q} b_{n}^{p-q} \mathbb{E} \abs{\sum_{k=1}^{n} X_{n,k} I_{\left\{\lvert X_{n,k} \rvert \leqslant b_{n} \right\}}}^{q} + \frac{p b_{n}^{p-q}}{q - p} \mathbb{E} \abs{\sum_{k=1}^{n} X_{n,k} I_{\left\{\lvert X_{n,k} \rvert \leqslant b_{n} \right\}}}^{q} \\
&\qquad = \left(\varepsilon^{-q} + \frac{p}{q - p} \right) b_{n}^{p-q} \mathbb{E} \abs{\sum_{k=1}^{n} X_{n,k} I_{\left\{\lvert X_{n,k} \rvert \leqslant b_{n} \right\}}}^{q}
\end{align*}
which yields
\begin{equation}\label{eq:2.11}
\mathbb{E} \left(\abs{\sum_{k=1}^{n} X_{n,k} } - \varepsilon b_{n} \right)_{+}^{p} \leqslant \left(\varepsilon^{-q} + \frac{p}{q - p} \right) b_{n}^{p-q} \mathbb{E} \abs{\sum_{k=1}^{n} X_{n,k} I_{\left\{\lvert X_{n,k} \rvert \leqslant b_{n} \right\}}}^{q} + \mathbb{E} \abs{\sum_{k=1}^{n} X_{n,k} I_{\left\{\lvert X_{n,k} \rvert > b_{n} \right\}}}^{p}.
\end{equation}
Hence, we can still announce the result hereinafter whose proof follows from inequality \eqref{eq:2.11}; we omit the details.

\begin{theorem}\label{thr:2.5}
Let $0 < p < 1$, $\left\{X_{n,k}, \, 1 \leqslant k \leqslant n, n \geqslant 1 \right\}$ be an array of random variables satisfying $\mathbb{E} \lvert X_{n,k} \rvert^{p} < \infty$ for each $1 \leqslant k \leqslant n$ and all $n \geqslant 1$. If $\{b_{n} \}$, $\{c_{n} \}$ are real sequences of positive numbers such that \\

\noindent \textnormal{(a)} $\sum_{n=1}^{\infty} \sum_{k=1}^{n} c_{n} b_{n}^{-q} \mathbb{E} \lvert X_{n,k} \rvert^{q} I_{\left\{\lvert X_{n,k} \rvert \leqslant b_{n} \right\}} < \infty$ for some $p < q \leqslant 1$, \\

\noindent \textnormal{(b)} $\sum_{n=1}^{\infty} \sum_{k=1}^{n} c_{n} b_{n}^{-p} \mathbb{E} \, \lvert X_{n,k} \rvert^{p} I_{\left\{\lvert X_{n,k} \rvert > b_{n} \right\}} < \infty$, \\

\noindent then
\begin{equation*}
\sum_{n=1}^{\infty} c_{n} \mathbb{E} \left[\frac{\abs{\sum_{k=1}^{n} X_{n,k}}}{b_{n}} - \varepsilon \right]_{+}^{p} < \infty
\end{equation*}
for all $\varepsilon > 0$.
\end{theorem}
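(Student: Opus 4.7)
The plan is to take inequality \eqref{eq:2.11}, which is established immediately before the theorem statement, as the key estimate, and combine it with the elementary $c_{r}$-inequality $|x+y|^{r} \leqslant |x|^{r} + |y|^{r}$ valid for $0 < r \leqslant 1$. Since the hypothesis forces $0 < p < q \leqslant 1$, this sub-additivity is simultaneously available at both exponents $p$ and $q$, and this is precisely why the statement insists on $q \leqslant 1$.

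First, I would divide \eqref{eq:2.11} by $b_{n}^{p}$ and multiply by $c_{n}$, using $b_{n}^{p-q}/b_{n}^{p} = b_{n}^{-q}$, to obtain a pointwise bound of the form
\begin{equation*}
c_{n} \mathbb{E}\left[\frac{\abs{\sum_{k=1}^{n} X_{n,k}}}{b_{n}} - \varepsilon \right]_{+}^{p} \leqslant \left(\varepsilon^{-q} + \frac{p}{q-p}\right) \frac{c_{n}}{b_{n}^{q}} \, \mathbb{E} \abs{\sum_{k=1}^{n} X_{n,k} I_{\{\abs{X_{n,k}} \leqslant b_{n}\}}}^{q} + \frac{c_{n}}{b_{n}^{p}} \, \mathbb{E} \abs{\sum_{k=1}^{n} X_{n,k} I_{\{\abs{X_{n,k}} > b_{n}\}}}^{p}.
\end{equation*}
Next, I would apply the $c_{r}$-inequality with $r = q$ to the truncated sum and with $r = p$ to the tail sum, so that each of the two sum-moments above is majorized by the corresponding $\sum_{k=1}^{n} \mathbb{E}\abs{X_{n,k}}^{r} I_{\{\cdot\}}$. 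Summing over $n$ and invoking assumptions (a) and (b) then delivers the desired finiteness.

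There is no substantial obstacle here: inequality \eqref{eq:2.11} already performs the delicate separation of bulk and tail contributions in a way that tolerates the absence of both centering and any dependence structure, and the sub-additivity of $x \mapsto x^{r}$ for $r \leqslant 1$ trivializes the handling of the sum-moments. This is also what explains why, in contrast to Theorems~\ref{thr:2.1}--\ref{thr:2.4}, no moment inequality on $\{X_{n,k}\}$ is imposed and no centering appears in the conclusion.
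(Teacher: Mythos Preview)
Your proposal is correct and is precisely the argument the paper has in mind: the paper simply states that the result ``follows from inequality \eqref{eq:2.11}'' and omits the details, and what you wrote---dividing \eqref{eq:2.11} by $b_n^{p}$, applying the sub-additivity $|{\sum_k a_k}|^{r}\leqslant \sum_k |a_k|^{r}$ at exponents $q\leqslant 1$ and $p<1$, and invoking (a) and (b)---is exactly the intended completion.
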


\begin{remark}
Under the assumptions of Theorem~\ref{thr:2.1} (or Theorem~\ref{thr:2.3}) we obviously have, for any $0 < r \leqslant p$,
\begin{equation}\label{eq:2.12}
\sum_{n=1}^{\infty} c_{n} \mathbb{E} \left[\frac{\abs{\sum_{k=1}^{n} (X_{n,k} - \mathbb{E} \, X_{n,k})}}{b_{n}} - \varepsilon \right]_{+}^{r} < \infty
\end{equation}
for all $\varepsilon > 0$, because
\begin{align*}
&\mathbb{E} \left(\abs{\sum_{k=1}^{n} (X_{n,k} - \mathbb{E} \, X_{n,k})} - \varepsilon b_{n} \right)_{+}^{r} \\
&\quad \leqslant b_{n}^{r} \mathbb{P} \left\{\abs{\sum_{k=1}^{n} (X_{n,k} - \mathbb{E} \, X_{n,k})} > \varepsilon b_{n} \right\} + \int_{b_{n}^{r}}^{\infty} \mathbb{P} \left\{\abs{\sum_{k=1}^{n} (X_{n,k} - \mathbb{E} \, X_{n,k})} > t^{1/r} \right\} \mathrm{d}t \\
& \quad = b_{n}^{r} \mathbb{P} \left\{\abs{\sum_{k=1}^{n} (X_{n,k} - \mathbb{E} \, X_{n,k})} > \varepsilon b_{n} \right\} + \frac{r}{p} \int_{b_{n}^{p}}^{\infty} s^{r/p - 1} \mathbb{P} \left\{\abs{\sum_{k=1}^{n} (X_{n,k} - \mathbb{E} \, X_{n,k})} > s^{1/p} \right\} \mathrm{d}s \quad \left(t = s^{r/p} \right) \\
& \quad \leqslant b_{n}^{r} \mathbb{P} \left\{\abs{\sum_{k=1}^{n} (X_{n,k} - \mathbb{E} \, X_{n,k})} > \varepsilon b_{n} \right\} + b_{n}^{r - p} \int_{b_{n}^{p}}^{\infty} \mathbb{P} \left\{\abs{\sum_{k=1}^{n} (X_{n,k} - \mathbb{E} \, X_{n,k})} > s^{1/p} \right\} \mathrm{d}s
\end{align*}
and
\begin{align*}
&\sum_{n=1}^{\infty} c_{n} \mathbb{E} \left[\frac{\abs{\sum_{k=1}^{n} (X_{n,k} - \mathbb{E} \, X_{n,k})}}{b_{n}} - \varepsilon \right]_{+}^{r} \\
& \quad \leqslant \sum_{n=1}^{\infty} \left(c_{n} \mathbb{P} \left\{\abs{\sum_{k=1}^{n} (X_{n,k} - \mathbb{E} \, X_{n,k})} > \varepsilon b_{n} \right\} + \frac{c_{n}}{b_{n}^{p}} \int_{b_{n}^{p}}^{\infty} \mathbb{P} \left\{\abs{\sum_{k=1}^{n} (X_{n,k} - \mathbb{E} \, X_{n,k})} > s^{1/p} \right\} \mathrm{d}s \right).
\end{align*}
In the same way, \eqref{eq:2.12} holds for every $0 < r \leqslant 1$ whenever the assumptions of Theorem~\ref{thr:2.2} (or Theorem~\ref{thr:2.4}) are met.
\end{remark}

\section{Applications}

It is straightforward to see that
\begin{gather}
\int_{u^{p}}^{\infty} \mathbb{P} \left\{\lvert X_{n,k} \rvert^{p} > t \right\} \mathrm{d}t \leqslant \mathbb{E} \, \lvert X_{n,k} \rvert^{p} I_{\left\{\lvert X_{n,k} \rvert > u \right\}}, \label{eq:3.1} \\
\mathbb{P} \left\{\lvert X_{n,k} \rvert > u \right\} \leqslant \frac{\mathbb{E} \, \lvert X_{n,k} \rvert^{p} I_{\left\{\lvert X_{n,k} \rvert > u \right\}}}{u^{p}} \label{eq:3.2}
\end{gather}
and
\begin{equation}\label{eq:3.3}
\int_{0}^{u^{q}} \mathbb{P} \left\{\lvert X_{n,k} \rvert^{q} > t \right\} \mathrm{d}t \leqslant u^{q- p} \mathbb{E} \, \lvert X_{n,k} \rvert^{p} I_{\left\{\lvert X_{n,k} \rvert > u \right\}} + \mathbb{E} \, \lvert X_{n,k} \rvert^{q} I_{\left\{\lvert X_{n,k} \rvert \leqslant u \right\}}
\end{equation}
for any $p,q,u > 0$. Thus, if $\{\alpha_{n} \}$ is a constant sequence then both Theorems~\ref{thr:2.1} and~\ref{thr:2.2} can be gathered in the following result.

\begin{corollary}\label{cor:3.1}
Let $p \geqslant 1$, $\left\{X_{n,k}, \, 1 \leqslant k \leqslant n, n \geqslant 1 \right\}$ be an array of random variables satisfying $\mathbb{E} \lvert X_{n,k} \rvert^{p} < \infty$ for each $1 \leqslant k \leqslant n$ and all $n \geqslant 1$, and verifying \eqref{eq:2.1} for a $q > p$ and some constant sequence $\{\alpha_{n} \}$. If $\{b_{n} \}$, $\{c_{n} \}$ are real sequences of positive numbers such that \\

\noindent \textnormal{(i)} $\sum_{n=1}^{\infty} \sum_{k=1}^{n} c_{n} b_{n}^{-q} \mathbb{E} \, \lvert X_{n,k} \rvert^{q} I_{\left\{\lvert X_{n,k} \rvert \leqslant b_{n} \right\}} < \infty$, \\

\noindent \textnormal{(ii)} $\sum_{n=1}^{\infty} \sum_{k=1}^{n} c_{n} b_{n}^{-p} \mathbb{E} \, \lvert X_{n,k} \rvert^{p} I_{\left\{\lvert X_{n,k} \rvert > b_{n} \right\}} < \infty$, \\

\noindent then
\begin{equation*}
\sum_{n=1}^{\infty} c_{n} \mathbb{E} \left[\frac{\abs{\sum_{k=1}^{n} (X_{n,k} - \mathbb{E} \, X_{n,k})}}{b_{n}} - \varepsilon \right]_{+}^{p} < \infty
\end{equation*}
for all $\varepsilon > 0$.
\end{corollary}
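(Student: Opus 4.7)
The plan is to deduce Corollary~\ref{cor:3.1} directly from Theorems~\ref{thr:2.1} and~\ref{thr:2.2}, splitting on whether $p>1$ or $p=1$, by checking that hypotheses (i)--(ii) of the corollary, combined with the elementary inequalities \eqref{eq:3.1}--\eqref{eq:3.3} and the fact that the constant sequence $\{\alpha_{n}\}$ is bounded, imply the more technical integral hypotheses of those theorems. No new probabilistic ideas are needed; the work is purely a matter of upper-bounding the integrals that appear in Theorems~\ref{thr:2.1} and~\ref{thr:2.2} by tail-truncated $p$-th or $q$-th moment expressions.

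For $p>1$ I would invoke Theorem~\ref{thr:2.1} and verify its (a)--(c) as follows. Applying \eqref{eq:3.3} with $u=b_{n}$ gives
\[
\int_{0}^{b_{n}^{q}} \mathbb{P}\{|X_{n,k}|^{q}>t\}\,\mathrm{d}t \leqslant b_{n}^{q-p}\,\mathbb{E}\,|X_{n,k}|^{p}I_{\{|X_{n,k}|>b_{n}\}} + \mathbb{E}\,|X_{n,k}|^{q}I_{\{|X_{n,k}|\leqslant b_{n}\}},
\]
so that condition (a) of Theorem~\ref{thr:2.1}, after multiplication by $\alpha_{n}c_{n}b_{n}^{-q}$, is controlled by a constant multiple of the sum of the series in (i) and (ii), using that $\alpha_{n}$ is bounded. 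Inequality \eqref{eq:3.1} with $u=b_{n}$ reduces condition (c) of Theorem~\ref{thr:2.1} to a constant multiple of the series in (ii). The only slightly delicate point is condition (b): I would use that on $\{|X_{n,k}|>b_{n}\}$ one has $|X_{n,k}|\leqslant b_{n}^{1-p}|X_{n,k}|^{p}$ (because $p\geqslant 1$), so that
\[
\frac{\mathbb{E}\,|X_{n,k}|I_{\{|X_{n,k}|>b_{n}\}}}{b_{n}} \leqslant \frac{\mathbb{E}\,|X_{n,k}|^{p}I_{\{|X_{n,k}|>b_{n}\}}}{b_{n}^{p}},
\]
which is again summable by (ii). This is the one algebraic manipulation that is not simply quoted from \eqref{eq:3.1}--\eqref{eq:3.3}, and it is the step I would flag as most likely to be overlooked; everything else is bookkeeping.

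For $p=1$ I would invoke Theorem~\ref{thr:2.2} instead. Conditions (a), (b), (d) there are treated exactly as above (with $p=1$ the estimate for (b) is trivial, identical to (ii)), and the extra condition (c) of Theorem~\ref{thr:2.2}, namely
\[
\sum_{n=1}^{\infty}\sum_{k=1}^{n}\frac{\alpha_{n}c_{n}}{b_{n}}\int_{b_{n}}^{\infty}\mathbb{P}\{|X_{n,k}|>t\}\,\mathrm{d}t<\infty,
\]
follows at once from \eqref{eq:3.1} with $p=1$ and $u=b_{n}$, combined with boundedness of $\alpha_{n}$ and assumption (ii). Assembling the two cases yields the conclusion, and since the author writes ``we omit the details'' in a similar spirit just before the statement, a one-paragraph write-up citing Theorems~\ref{thr:2.1}--\ref{thr:2.2} together with \eqref{eq:3.1}--\eqref{eq:3.3} and the $p\geqslant 1$ bound $|X_{n,k}|I_{\{|X_{n,k}|>b_{n}\}}\leqslant b_{n}^{1-p}|X_{n,k}|^{p}I_{\{|X_{n,k}|>b_{n}\}}$ should suffice.
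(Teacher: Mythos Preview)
Your proposal is correct and follows essentially the same route as the paper: reduce to Theorems~\ref{thr:2.1} and~\ref{thr:2.2} by bounding their integral hypotheses via \eqref{eq:3.1}--\eqref{eq:3.3}, and handle condition~(b) with the pointwise inequality $|X_{n,k}|I_{\{|X_{n,k}|>b_n\}}/b_n \leqslant |X_{n,k}|^{p}I_{\{|X_{n,k}|>b_n\}}/b_n^{p}$ for $p\geqslant 1$. One small slip: you refer to a condition ``(d)'' of Theorem~\ref{thr:2.2}, which has only (a), (b), (c); otherwise your verification matches the paper's.
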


\begin{proof}
Since $\{\alpha_{n} \}$ is a constant sequence, (ii) ensures assumption (c) of Theorems~\ref{thr:2.1} and~\ref{thr:2.2} via \eqref{eq:3.1}. According to \eqref{eq:3.3}, (i) and (ii) together guarantee assumption (a) of Theorems~\ref{thr:2.1} and~\ref{thr:2.2}. Finally, assumption (b) of Theorems~\ref{thr:2.1} and~\ref{thr:2.2} follows from (ii) by noting that
\begin{equation*}
\frac{\lvert X_{n,k} \rvert I_{\left\{\lvert X_{n,k} \rvert > b_{n} \right\}}}{b_{n}} \leqslant \frac{\lvert X_{n,k} \rvert^{p} I_{\left\{\lvert X_{n,k} \rvert > b_{n} \right\}}}{b_{n}^{p}}.
\end{equation*}
Hence, Corollary~\ref{cor:3.1} is proved.
\end{proof}

Similarly, we can also join Theorems~\ref{thr:2.3} and~\ref{thr:2.4} when sequences $\{\beta_{n} \}$ and $\{\xi_{n} \}$ are constant.

\begin{corollary}\label{cor:3.2}
Let $p \geqslant 1$, $\left\{X_{n,k}, \, 1 \leqslant k \leqslant n, n \geqslant 1 \right\}$ be an array of random variables satisfying $\mathbb{E} \lvert X_{n,k} \rvert^{p} < \infty$ for each $1 \leqslant k \leqslant n$ and all $n \geqslant 1$, and verifying \eqref{eq:2.8} for a $q > \max\{p,2\}$ and some constant sequences $\{\beta_{n} \}$, $\{\xi_{n} \}$. If $\{b_{n} \}$, $\{c_{n} \}$ are real sequences of positive numbers such that \\

\noindent \textnormal{(i)} $\sum_{n=1}^{\infty} \sum_{k=1}^{n} c_{n} b_{n}^{-q} \mathbb{E} \, \lvert X_{n,k} \rvert^{q} I_{\left\{\lvert X_{n,k} \rvert \leqslant b_{n} \right\}} < \infty$, \\

\noindent \textnormal{(ii)} $\sum_{n=1}^{\infty} \sum_{k=1}^{n} c_{n} b_{n}^{-p} \mathbb{E} \, \lvert X_{n,k} \rvert^{p} I_{\left\{\lvert X_{n,k} \rvert > b_{n} \right\}} < \infty$, \\

\noindent \textnormal{(iii)} $\sum_{n=1}^{\infty} c_{n} b_{n}^{-pq/2} \left(\sum_{k=1}^{n} \mathbb{E} \, \lvert X_{n,k} \rvert^{p} I_{\left\{\lvert X_{n,k} \rvert > b_{n} \right\}} \right)^{q/2}  < \infty$, \\

\noindent \textnormal{(iv)} $\sum_{n=1}^{\infty} c_{n} b_{n}^{-q} \left(\sum_{k=1}^{n} \mathbb{E} \, X_{n,k}^{2} I_{\left\{\lvert X_{n,k} \rvert \leqslant b_{n} \right\}} \right)^{q/2}  < \infty$, \\

\noindent then
\begin{equation*}
\sum_{n=1}^{\infty} c_{n} \mathbb{E} \left[\frac{\abs{\sum_{k=1}^{n} (X_{n,k} - \mathbb{E} \, X_{n,k})}}{b_{n}} - \varepsilon \right]_{+}^{p} < \infty
\end{equation*}
for all $\varepsilon > 0$.
\end{corollary}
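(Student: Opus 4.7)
The plan is to reduce Corollary~\ref{cor:3.2} to Theorem~\ref{thr:2.3} when $p > 1$ and to Theorem~\ref{thr:2.4} when $p = 1$, by verifying that hypotheses (i)--(iv) together with the constancy of $\{\beta_{n}\}$ and $\{\xi_{n}\}$ imply conditions (a)--(e) of those theorems. The ``von Bahr--Esseen type'' conditions (a), (d), (e) are handled exactly as in Corollary~\ref{cor:3.1}: combining (i) and (ii) with the elementary inequalities \eqref{eq:3.1}--\eqref{eq:3.3}, plus the pointwise bound $|X_{n,k}|/b_{n} \leqslant (|X_{n,k}|/b_{n})^{p}$ on the event $\{|X_{n,k}| > b_{n}\}$ (which uses $p \geqslant 1$), dominates each required integral by (i) or (ii).

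What is genuinely new is the ``Rosenthal variance'' condition (c). I would write
\begin{equation*}
\int_{0}^{b_{n}^{2}} \mathbb{P}\{X_{n,k}^{2} > t\}\,\mathrm{d}t = \mathbb{E}\, X_{n,k}^{2} I_{\{|X_{n,k}| \leqslant b_{n}\}} + b_{n}^{2}\, \mathbb{P}\{|X_{n,k}| > b_{n}\}
\end{equation*}
and use \eqref{eq:3.2} with $u = b_{n}$ to bound the second summand by $b_{n}^{2-p}\, \mathbb{E} |X_{n,k}|^{p} I_{\{|X_{n,k}| > b_{n}\}}$. Summing over $k$, applying $(a+b)^{q/2} \leqslant 2^{q/2-1}(a^{q/2} + b^{q/2})$ (valid since $q/2 > 1$), and using $b_{n}^{-q}\cdot b_{n}^{(2-p)q/2} = b_{n}^{-pq/2}$ splits condition (c) into two series that are dominated by (iv) and (iii) respectively.

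For condition (b), I would perform the substitution $u = t^{1/(p-q)}$ (replaced by $u = t^{1/(1-q)}$ for Theorem~\ref{thr:2.4}), which transforms $\int_{0}^{b_{n}^{p-q}}\cdots\,\mathrm{d}t$ into $(q-p)\int_{b_{n}}^{\infty}\bigl(\sum_{k}\mathbb{E}\min(X_{n,k}^{2},u^{2})\bigr)^{q/2} u^{p-q-1}\,\mathrm{d}u$. I would then decompose $\mathbb{E}\min(X_{n,k}^{2},u^{2})$ into $\mathbb{E} X_{n,k}^{2} I_{\{|X_{n,k}|\leqslant b_{n}\}}$ plus a remainder supported on $\{|X_{n,k}| > b_{n}\}$. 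The remainder is bounded by $C\, u^{2-p}\,\mathbb{E}|X_{n,k}|^{p} I_{\{|X_{n,k}|>b_{n}\}}$ when $p \leqslant 2$ (using $|X|^{2-p} \leqslant u^{2-p}$ together with \eqref{eq:3.2}) and by $C\, b_{n}^{2-p}\,\mathbb{E}|X_{n,k}|^{p} I_{\{|X_{n,k}|>b_{n}\}}$ when $p > 2$ (using $|X|^{p-2} \geqslant b_{n}^{p-2}$ on $\{|X|>b_{n}\}$ together with $u^{2-p}\leqslant b_{n}^{2-p}$). In either regime, after raising to $q/2$, integrating against $u^{p-q-1}$, and multiplying by $\xi_{n} c_{n} b_{n}^{-p}$, the powers of $b_{n}$ collapse to $-q$ and $-pq/2$, matching (iv) and (iii).

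The main obstacle is the exponent bookkeeping in condition (b): one must verify that the integrands in $u$ decay faster than $u^{-1}$---the relevant exponents being $p-q-1$ and $p(1-q/2)-1$, both strictly less than $-1$ because $q > \max\{p,2\}$---and that the two regimes $p \leqslant 2$ and $p > 2$ yield matching final powers of $b_{n}$. Once this is done, assembling the pieces produces the hypotheses of Theorem~\ref{thr:2.3} (respectively Theorem~\ref{thr:2.4}), and the conclusion follows.
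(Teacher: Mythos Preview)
Your proposal is correct and follows essentially the same route as the paper: reduce to Theorems~\ref{thr:2.3} and~\ref{thr:2.4} by checking (a)--(e), with (a), (d), (e) handled exactly as in Corollary~\ref{cor:3.1} and (c) via the identity $\int_{0}^{b_{n}^{2}}\mathbb{P}\{X_{n,k}^{2}>t\}\,\mathrm{d}t=\mathbb{E}X_{n,k}^{2}I_{\{|X_{n,k}|\leqslant b_{n}\}}+b_{n}^{2}\mathbb{P}\{|X_{n,k}|>b_{n}\}$ together with \eqref{eq:3.2}. For condition (b) the paper works directly in the variable $t$ and bounds the middle piece by $\max\bigl(b_{n}^{2-p},\,t^{(2-p)/(p-q)}\bigr)\sum_{k}\mathbb{E}|X_{n,k}|^{p}I_{\{|X_{n,k}|>b_{n}\}}$, covering both ranges of $p$ at once; your substitution $u=t^{1/(p-q)}$ and explicit case split $p\leqslant 2$ versus $p>2$ is a cosmetic repackaging of the same computation, and your exponent checks ($p-q-1<-1$, $p(1-q/2)-1<-1$) are exactly what makes the $u$-integrals converge.
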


\begin{proof}
From (iii) and (iv), we have that conditions (b), (c) of Theorems~\ref{thr:2.3} and~\ref{thr:2.4} are verified since
\begin{align*}
& b_{n}^{-p} \int_{0}^{b_{n}^{p - q}} \left(\sum_{k=1}^{n} \int_{0}^{t^{2/(p - q)}} \mathbb{P} \left\{X_{n,k}^{2} > s \right\} \mathrm{d}s \right)^{q/2} \mathrm{d}t \\
&\; \; \leqslant 2^{(q - 2)/2} b_{n}^{-p} \int_{0}^{b_{n}^{p - q}} \left(\sum_{k=1}^{n} t^{2/(p - q)} \mathbb{P} \left\{X_{n,k}^{2} > t^{2/(p - q)} \right\} \right)^{q/2} \mathrm{d}t \\
& \; \; \quad + 2^{(q - 2)/2} b_{n}^{-p} \int_{0}^{b_{n}^{p - q}} \left(\sum_{k=1}^{n} \mathbb{E} \, X_{n,k}^{2} I_{\left\{\lvert X_{n,k} \rvert \leqslant t^{1/(p - q)} \right\}} \right)^{q/2} \mathrm{d}t \\
&\; \; \leqslant 2^{(q - 2)/2} b_{n}^{-p} \int_{0}^{b_{n}^{p - q}} \left(\sum_{k=1}^{n} t^{(2 - p)/(p - q)} \mathbb{E} \, \lvert X_{n,k} \rvert^{p} I_{\left\{\lvert X_{n,k} \rvert > t^{1/(p - q)} \right\}} \right)^{q/2} \mathrm{d}t \\
& \; \; \quad + 2^{(q - 2)/2} b_{n}^{-p} \int_{0}^{b_{n}^{p - q}} \left(\sum_{k=1}^{n} \mathbb{E} \, X_{n,k}^{2} I_{\left\{\lvert X_{n,k} \rvert \leqslant b_{n} \right\}} + \sum_{k=1}^{n} \mathbb{E} \, X_{n,k}^{2} I_{\left\{b_{n} < \lvert X_{n,k} \rvert \leqslant t^{1/(p - q)} \right\}} \right)^{q/2} \mathrm{d}t \\
&\; \; \leqslant 2^{(q - 2)/2} b_{n}^{-p} \left(\sum_{k=1}^{n} \mathbb{E} \, \lvert X_{n,k} \rvert^{p} I_{\left\{\lvert X_{n,k} \rvert > b_{n} \right\}} \right)^{q/2} \int_{0}^{b_{n}^{p - q}} t^{q(2 - p)/(2p - 2q)} \mathrm{d}t \\
&\; \; \quad + 2^{q - 2} \left(\frac{\sum_{k=1}^{n} \mathbb{E} \, X_{n,k}^{2} I_{\left\{\lvert X_{n,k} \rvert \leqslant b_{n} \right\}}}{b_{n}^{2}} \right)^{q/2} + 2^{q - 2} b_{n}^{-p} \int_{0}^{b_{n}^{p - q}} \left(\sum_{k=1}^{n} \mathbb{E} \, X_{n,k}^{2} I_{\left\{b_{n} < \lvert X_{n,k} \rvert \leqslant t^{1/(p - q)} \right\}} \right)^{q/2} \mathrm{d}t \\
&\; \; \leqslant 2^{(q - 2)/2} b_{n}^{-p} \left(\sum_{k=1}^{n} \mathbb{E} \, \lvert X_{n,k} \rvert^{p} I_{\left\{\lvert X_{n,k} \rvert > b_{n} \right\}} \right)^{q/2} \frac{2(q - p) b_{n}^{p - pq/2}}{p(q - 2)} + 2^{q - 2} \left(\frac{\sum_{k=1}^{n} \mathbb{E} \, X_{n,k}^{2} I_{\left\{\lvert X_{n,k} \rvert \leqslant b_{n} \right\}}}{b_{n}^{2}} \right)^{q/2}  \\
&\; \; \quad + 2^{q - 2} b_{n}^{-p} \left(\sum_{k=1}^{n} \mathbb{E} \, \lvert X_{n,k} \rvert^{p} I_{\left\{\lvert X_{n,k} \rvert > b_{n} \right\}} \right)^{q/2} \left[b_{n}^{p - pq/2} + \int_{0}^{b_{n}^{p - q}} t^{q(2 - p)/(2p - 2q)} \, \mathrm{d}t \right] \\
&\; \; = \frac{2^{q/2} (q - p)}{p(q - 2)} \left(\frac{\sum_{k=1}^{n} \mathbb{E} \, \lvert X_{n,k} \rvert^{p} I_{\left\{\lvert X_{n,k} \rvert > b_{n} \right\}}}{b_{n}^{p}} \right)^{q/2}  + 2^{q - 2} \left(\frac{\sum_{k=1}^{n} \mathbb{E} \, X_{n,k}^{2} I_{\left\{\lvert X_{n,k} \rvert \leqslant b_{n} \right\}}}{b_{n}^{2}} \right)^{q/2}  \\
&\; \; \quad + 2^{q - 2} \left(\frac{\sum_{k=1}^{n} \mathbb{E} \, \lvert X_{n,k} \rvert^{p} I_{\left\{\lvert X_{n,k} \rvert > b_{n} \right\}}}{b_{n}^{p}} \right)^{q/2} + \frac{2^{q - 1} (q - p)}{p(q - 2)} \left(\frac{\sum_{k=1}^{n} \mathbb{E} \, \lvert X_{n,k} \rvert^{p} I_{\left\{\lvert X_{n,k} \rvert > b_{n} \right\}}}{b_{n}^{p}} \right)^{q/2} \\
&\; \; = \left[\frac{(2^{q/2} + 2^{q - 1}) (q - p)}{p(q - 2)} + 2^{q - 2} \right] \left(\frac{\sum_{k=1}^{n} \mathbb{E} \, \lvert X_{n,k} \rvert^{p} I_{\left\{\lvert X_{n,k} \rvert > b_{n} \right\}}}{b_{n}^{p}} \right)^{q/2} \\
&\; \; \quad + 2^{q - 2} \left(\frac{\sum_{k=1}^{n} \mathbb{E} \, X_{n,k}^{2} I_{\left\{\lvert X_{n,k} \rvert \leqslant b_{n} \right\}}}{b_{n}^{2}} \right)^{q/2}
\end{align*}
and
\begin{align*}
& b_{n}^{-q} \left(\sum_{k=1}^{n} \int_{0}^{b_{n}^{2}} \mathbb{P} \left\{X_{n,k}^{2} > t \right\} \, \mathrm{d}t \right)^{q/2} \\
& \; \; \leqslant 2^{(q - 2)/2} b_{n}^{-q} \left(\sum_{k=1}^{n} b_{n}^{2} \mathbb{P} \left\{\lvert X_{n,k} \rvert > b_{n} \right\} \right)^{q/2} + 2^{(q - 2)/2} b_{n}^{-q} \left(\sum_{k=1}^{n} \mathbb{E} \, X_{n,k}^{2} I_{\left\{\lvert X_{n,k} \rvert \leqslant b_{n} \right\}} \right)^{q/2} \\
&\; \; \leqslant 2^{(q - 2)/2} b_{n}^{-q} \left(b_{n}^{2 - p} \sum_{k=1}^{n} \mathbb{E} \, \lvert X_{n,k} \rvert^{p} I_{\left\{\lvert X_{n,k} \rvert > b_{n} \right\}} \right)^{q/2} + 2^{(q - 2)/2} \left(\frac{\sum_{k=1}^{n} \mathbb{E} \, X_{n,k}^{2} I_{\left\{\lvert X_{n,k} \rvert \leqslant b_{n} \right\}}}{b_{n}^{2}} \right)^{q/2} \\
&\; \; \leqslant 2^{(q - 2)/2} \left(\frac{\sum_{k=1}^{n} \mathbb{E} \, \lvert X_{n,k} \rvert^{p} I_{\left\{\lvert X_{n,k} \rvert > b_{n} \right\}}}{b_{n}^{p}} \right)^{q/2} + 2^{(q - 2)/2} \left(\frac{\sum_{k=1}^{n} \mathbb{E} \, X_{n,k}^{2} I_{\left\{\lvert X_{n,k} \rvert \leqslant b_{n} \right\}}}{b_{n}^{2}} \right)^{q/2}.
\end{align*}
The remaining assumptions of Theorems~\ref{thr:2.3} and~\ref{thr:2.4} follow from (i), (ii) as in the proof of Corollary~\ref{cor:3.1}. The proof is complete.
\end{proof}

Let $\{\Psi_{n,k}(x), \, 1 \leqslant k \leqslant n, n \geqslant 1 \}$ be an array of functions defined on $[0,\infty)$ satisfying for all $n \geqslant 1$ and every $1 \leqslant k \leqslant n$,
\begin{equation}\label{eq:3.4}
\Psi_{n,k}(0) = 0, \quad 0 < \frac{\Psi_{n,k}(t)}{t^{p}} \uparrow \; \; \text{and} \; \; \frac{\Psi_{n,k}(t)}{t^{q}} \downarrow \; \; \text{as} \; \; 0 < t \uparrow
\end{equation}
for some $1 \leqslant p < q$.

\begin{corollary}\label{cor:3.3}
Let $\{\Psi_{n,k}(x), \, 1 \leqslant k \leqslant n, n \geqslant 1 \}$ be an array of functions defined on $[0,\infty)$ verifying \eqref{eq:3.4} for some $1 \leqslant p < q \leqslant 2$, and $\left\{X_{n,k}, \, 1 \leqslant k \leqslant n, n \geqslant 1 \right\}$ be an array of zero-mean random variables satisfying \eqref{eq:2.1} for such $q$ and some constant sequence $\alpha_{n}$. If $\{b_{n} \}$, $\{c_{n} \}$ are real sequences of positive numbers such that \\

\noindent \textnormal{(1)} $\sum_{n=1}^{\infty} c_{n} \sum_{k=1}^{n} \mathbb{E} \Psi_{n,k}(\lvert X_{n,k} \rvert)/\Psi_{n,k}(b_{n})  < \infty$, \\

\noindent then $\sum_{n=1}^{\infty} c_{n} \mathbb{E} \left(\abs{\sum_{k=1}^{n} X_{n,k}}/b_{n} - \varepsilon \right)_{+}^{p} < \infty$ for all $\varepsilon > 0$.
\end{corollary}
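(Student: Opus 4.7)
The plan is to reduce Corollary~\ref{cor:3.3} to Corollary~\ref{cor:3.1} by using the monotonicity hypotheses in \eqref{eq:3.4} to convert the single condition (1) into the two truncated-moment conditions (i), (ii) of Corollary~\ref{cor:3.1}.

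First, I would extract the two basic inequalities that \eqref{eq:3.4} provides. Since $\Psi_{n,k}(t)/t^{q}$ is non-increasing on $(0,\infty)$, for $0 < t \leqslant b_{n}$ we have $\Psi_{n,k}(t)/t^{q} \geqslant \Psi_{n,k}(b_{n})/b_{n}^{q}$, hence
\begin{equation*}
t^{q} \leqslant \frac{b_{n}^{q}}{\Psi_{n,k}(b_{n})}\,\Psi_{n,k}(t).
\end{equation*}
Similarly, since $\Psi_{n,k}(t)/t^{p}$ is non-decreasing, for $t > b_{n}$ one gets $t^{p} \leqslant b_{n}^{p}\,\Psi_{n,k}(t)/\Psi_{n,k}(b_{n})$. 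Substituting $t = \lvert X_{n,k}\rvert$, taking expectations against the appropriate indicators, multiplying by $c_{n}/b_{n}^{q}$ and $c_{n}/b_{n}^{p}$ respectively, and summing in $k$ and $n$, both
\begin{equation*}
\sum_{n=1}^{\infty} \sum_{k=1}^{n} \frac{c_{n}}{b_{n}^{q}}\,\mathbb{E}\lvert X_{n,k}\rvert^{q} I_{\{\lvert X_{n,k}\rvert \leqslant b_{n}\}}, \qquad \sum_{n=1}^{\infty} \sum_{k=1}^{n} \frac{c_{n}}{b_{n}^{p}}\,\mathbb{E}\lvert X_{n,k}\rvert^{p} I_{\{\lvert X_{n,k}\rvert > b_{n}\}}
\end{equation*}
are dominated by $\sum_{n=1}^{\infty} c_{n}\sum_{k=1}^{n}\mathbb{E}\Psi_{n,k}(\lvert X_{n,k}\rvert)/\Psi_{n,k}(b_{n})$, which is finite by (1). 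These are precisely assumptions (i) and (ii) of Corollary~\ref{cor:3.1}.

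It remains to justify the background moment hypothesis $\mathbb{E}\lvert X_{n,k}\rvert^{p} < \infty$ required by Corollary~\ref{cor:3.1}. From the non-decreasingness of $\Psi_{n,k}(t)/t^{p}$ together with $\Psi_{n,k}(1) > 0$, one has $\lvert X_{n,k}\rvert^{p} \leqslant 1 + \Psi_{n,k}(\lvert X_{n,k}\rvert)/\Psi_{n,k}(1)$ pointwise; since each individual summand in (1) must be finite, $\mathbb{E}\Psi_{n,k}(\lvert X_{n,k}\rvert) < \infty$ and therefore $\mathbb{E}\lvert X_{n,k}\rvert^{p} < \infty$ for every $n,k$. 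Because $\mathbb{E}\,X_{n,k} = 0$, the centered sum in Corollary~\ref{cor:3.1} coincides with $\sum_{k=1}^{n} X_{n,k}$, so applying Corollary~\ref{cor:3.1} with the same exponents $q > p \geqslant 1$ and the constant sequence $\{\alpha_{n}\}$ delivers the thesis of Corollary~\ref{cor:3.3}. No step here is delicate; the whole argument pivots on the two monotonicity-based moment bounds, after which Corollary~\ref{cor:3.1} does all the work.
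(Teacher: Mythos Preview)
Your proposal is correct and follows essentially the same route as the paper: you exploit the monotonicity in \eqref{eq:3.4} to bound $\mathbb{E}\lvert X_{n,k}\rvert^{q} I_{\{\lvert X_{n,k}\rvert \leqslant b_{n}\}}/b_{n}^{q}$ and $\mathbb{E}\lvert X_{n,k}\rvert^{p} I_{\{\lvert X_{n,k}\rvert > b_{n}\}}/b_{n}^{p}$ by $\mathbb{E}\Psi_{n,k}(\lvert X_{n,k}\rvert)/\Psi_{n,k}(b_{n})$, and then invoke Corollary~\ref{cor:3.1}. The only difference is cosmetic---the paper passes through the intermediate quantities $\mathbb{E}\Psi_{n,k}(\lvert X_{n,k}\rvert I_{\{\cdot\}})$ before bounding by $\mathbb{E}\Psi_{n,k}(\lvert X_{n,k}\rvert)$, whereas you go directly---and you additionally spell out why $\mathbb{E}\lvert X_{n,k}\rvert^{p} < \infty$, which the paper leaves implicit.
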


\begin{proof}
From $\Psi_{n,k}(t)/t^{q} \downarrow$ as $0 < t \uparrow$, it follows
\begin{equation}\label{eq:3.5}
\frac{\mathbb{E} \lvert X_{n,k} \rvert^{q} I_{\left\{\lvert X_{n,k} \rvert \leqslant b_{n} \right\}}}{b_{n}^{q}} \leqslant \frac{\mathbb{E} \Psi_{n,k}(\lvert X_{n,k} \rvert I_{\left\{\lvert X_{n,k} \rvert \leqslant b_{n} \right\}})}{\Psi_{n,k}(b_{n})}
\end{equation}
for all $1 \leqslant k \leqslant n$ and $n \geqslant 1$. On the other hand, $\Psi_{n,k}(t)/t^{p} \uparrow$ as $0 < t \uparrow$ entails
\begin{gather}
\frac{\mathbb{E} \lvert X_{n,k} \rvert^{p} I_{\left\{\lvert X_{n,k} \rvert > b_{n} \right\}}}{b_{n}^{p}} \leqslant \frac{\mathbb{E} \Psi_{n,k}(\lvert X_{n,k} \rvert I_{\left\{\lvert X_{n,k} \rvert > b_{n} \right\}})}{\Psi_{n,k}(b_{n})}, \label{eq:3.6} \\
\mathbb{E} \Psi_{n,k}(\lvert X_{n,k} \rvert I_{\left\{\lvert X_{n,k} \rvert \leqslant b_{n} \right\}}) \leqslant \mathbb{E} \Psi_{n,k}(\lvert X_{n,k} \rvert), \label{eq:3.7} \\
\mathbb{E} \Psi_{n,k}(\lvert X_{n,k} \rvert I_{\left\{\lvert X_{n,k} \rvert > b_{n} \right\}}) \leqslant \mathbb{E} \Psi_{n,k}(\lvert X_{n,k} \rvert), \label{eq:3.8}
\end{gather}
for each $1 \leqslant k \leqslant n$ and $n \geqslant 1$. Hence, \eqref{eq:3.5} and \eqref{eq:3.7} yield
\begin{equation*}
\frac{\mathbb{E} \lvert X_{n,k} \rvert^{q} I_{\left\{\lvert X_{n,k} \rvert \leqslant b_{n} \right\}}}{b_{n}^{q}} \leqslant \frac{\mathbb{E} \Psi_{n,k}(\lvert X_{n,k} \rvert)}{\Psi_{n,k}(b_{n})}
\end{equation*}
for any $1 \leqslant k \leqslant n$ and $n \geqslant 1$, which assures assumption (i) of Corollary~\ref{cor:3.1}. Moreover, \eqref{eq:3.6} and \eqref{eq:3.8} imply
\begin{equation}\label{eq:3.9}
\frac{\mathbb{E} \lvert X_{n,k} \rvert^{p} I_{\left\{\lvert X_{n,k} \rvert > b_{n} \right\}}}{b_{n}^{p}} \leqslant \frac{\mathbb{E} \Psi_{n,k}(\lvert X_{n,k} \rvert I_{\left\{\lvert X_{n,k} \rvert > b_{n} \right\}})}{\Psi_{n,k}(b_{n})} \leqslant \frac{\mathbb{E} \Psi_{n,k}(\lvert X_{n,k} \rvert)}{\Psi_{n,k}(b_{n})}
\end{equation}
for every $1 \leqslant k \leqslant n$ and $n \geqslant 1$. Thus, assumption (ii) of Corollary~\ref{cor:3.1} holds via \eqref{eq:3.9} and (1). The proof is complete.
\end{proof}

\begin{corollary}\label{cor:3.4}
Let $\{\Psi_{n,k}(x), \, 1 \leqslant k \leqslant n, n \geqslant 1 \}$ be an array of functions defined on $[0,\infty)$ verifying \eqref{eq:3.4} for some $p \geqslant 1$ and $q > \max\{2,p \}$, and $\left\{X_{n,k}, \, 1 \leqslant k \leqslant n, n \geqslant 1 \right\}$ be an array of zero-mean random variables satisfying \eqref{eq:2.8} for such $q$ and some constant sequences $\beta_{n}$, $\xi_{n}$. If $\{b_{n} \}$, $\{c_{n} \}$ are real sequences of positive numbers such that \\

\noindent \textnormal{(1)} $\sum_{n=1}^{\infty} c_{n} \sum_{k=1}^{n} \mathbb{E} \Psi_{n,k}(\lvert X_{n,k} \rvert)/\Psi_{n,k}(b_{n}) < \infty$, \\

\noindent \textnormal{(2)} $\sum_{n=1}^{\infty} c_{n} \left(\sum_{k=1}^{n} \mathbb{E} \, X_{n,k}^{2} I_{\left\{\lvert X_{n,k} \rvert \leqslant b_{n} \right\}}/b_{n}^{2} \right)^{q/2}  < \infty$, \\

\noindent \textnormal{(3)} $\sum_{n=1}^{\infty} c_{n} \left[\sum_{k=1}^{n} \mathbb{E} \Psi_{n,k}(\lvert X_{n,k} \rvert)/\Psi_{n,k}(b_{n}) \right]^{q/2} < \infty$, \\

\noindent then $\sum_{n=1}^{\infty} c_{n} \mathbb{E} \left(\abs{\sum_{k=1}^{n} X_{n,k}}/b_{n} - \varepsilon \right)_{+}^{p} < \infty$.
\end{corollary}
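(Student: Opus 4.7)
The plan is to reduce Corollary~\ref{cor:3.4} to Corollary~\ref{cor:3.2}, in the same spirit in which Corollary~\ref{cor:3.3} reduces to Corollary~\ref{cor:3.1}. Because $\{X_{n,k}\}$ is a zero-mean array satisfying \eqref{eq:2.8} with constant $\beta_n$, $\xi_n$ for the given $q>\max\{2,p\}$, it suffices to verify the four hypotheses (i)--(iv) of Corollary~\ref{cor:3.2} from the new hypotheses (1)--(3), together with the monotonicity properties \eqref{eq:3.4} of the functions $\Psi_{n,k}$.

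First, I would invoke the two pointwise bounds already established in the proof of Corollary~\ref{cor:3.3}, namely
\begin{equation*}
\frac{\mathbb{E}\lvert X_{n,k}\rvert^{q}I_{\{\lvert X_{n,k}\rvert\leqslant b_n\}}}{b_n^{q}}\leqslant\frac{\mathbb{E}\Psi_{n,k}(\lvert X_{n,k}\rvert)}{\Psi_{n,k}(b_n)},\qquad \frac{\mathbb{E}\lvert X_{n,k}\rvert^{p}I_{\{\lvert X_{n,k}\rvert>b_n\}}}{b_n^{p}}\leqslant\frac{\mathbb{E}\Psi_{n,k}(\lvert X_{n,k}\rvert)}{\Psi_{n,k}(b_n)},
\end{equation*}
which come from combining \eqref{eq:3.5}--\eqref{eq:3.9}. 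Summing these in $k$ and $n$ and weighting by $c_n$, hypothesis (1) directly implies conditions (i) and (ii) of Corollary~\ref{cor:3.2}.

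Next, condition (iv) of Corollary~\ref{cor:3.2} is literally the same statement as hypothesis (2), after rewriting $b_n^{-q}=(b_n^{-2})^{q/2}$. For condition (iii) of Corollary~\ref{cor:3.2}, I would apply the second bound above termwise inside the inner sum, obtaining
\begin{equation*}
\frac{1}{b_n^{pq/2}}\left(\sum_{k=1}^{n}\mathbb{E}\lvert X_{n,k}\rvert^{p}I_{\{\lvert X_{n,k}\rvert>b_n\}}\right)^{q/2}=\left(\sum_{k=1}^{n}\frac{\mathbb{E}\lvert X_{n,k}\rvert^{p}I_{\{\lvert X_{n,k}\rvert>b_n\}}}{b_n^{p}}\right)^{q/2}\leqslant\left(\sum_{k=1}^{n}\frac{\mathbb{E}\Psi_{n,k}(\lvert X_{n,k}\rvert)}{\Psi_{n,k}(b_n)}\right)^{q/2},
\end{equation*}
so that hypothesis (3) yields (iii). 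Once (i)--(iv) are in place, Corollary~\ref{cor:3.2} produces the desired series convergence; since $\mathbb{E}\,X_{n,k}=0$, the centering $X_{n,k}-\mathbb{E}\,X_{n,k}$ in the conclusion of Corollary~\ref{cor:3.2} reduces to $X_{n,k}$, matching the form stated in Corollary~\ref{cor:3.4}.

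Because each step is a direct transcription or a monotone truncation bound using \eqref{eq:3.4}, I do not anticipate a serious obstacle; the only mildly subtle point is noticing that (3) enters (iii) after dividing inside the $q/2$-power by $b_n^p$ rather than after raising to the $q/2$-th power, and that (2) corresponds to (iv) via the identity $b_n^{-q}=(b_n^{-2})^{q/2}$. Everything else is bookkeeping, so the proof will essentially consist of these verifications followed by a one-line appeal to Corollary~\ref{cor:3.2}.
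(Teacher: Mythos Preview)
Your proposal is correct and follows exactly the approach the paper takes: the paper's proof is the single sentence ``The thesis is a consequence of Corollary~\ref{cor:3.2} by arguing as in the proof of Corollary~\ref{cor:3.3},'' and your write-up is precisely the detailed verification that (1)--(3) together with \eqref{eq:3.4} imply conditions (i)--(iv) of Corollary~\ref{cor:3.2} via the bounds \eqref{eq:3.5}--\eqref{eq:3.9}. Your identification of (2) with (iv) and the termwise use of \eqref{eq:3.9} inside the $q/2$-power to get (iii) from (3) are exactly the intended steps.
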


\begin{proof}
The thesis is a consequence of Corollary~\ref{cor:3.2} by arguing as in the proof of Corollary~\ref{cor:3.3}.
\end{proof}

\begin{remark}
We observe that Theorem 3 of \cite{Wu14} can be obtained via Corollaries~\ref{cor:3.3} and~\ref{cor:3.4} by taking $c_{n} = 1$ for all $n \geqslant 1$ and $\Psi_{n,k}(x)$ not depending on $n,k$ satisfying $\Psi_{n,k}(0) = 0$; indeed, for such sequence $c_{n}$, the assumption $\sum_{n=1}^{\infty} c_{n} \left[\sum_{k=1}^{n} \mathbb{E} \Psi_{n,k}(\lvert X_{n,k} \rvert)/\Psi_{n,k}(b_{n}) \right]^{q/2} < \infty$ can be dropped in Corollary~\ref{cor:3.4}.
\end{remark}

The lemma below gives us a von Bahr-Esseen type inequality for row-wise
pairwise negative quadrant dependent (NQD) triangular arrays (see, for instance, \cite{Lita19}). The proof can be performed as in Theorem 2.1 of \cite{Chen14} by employing the truncation $X_{n,k}' = g_{x^{1/r}}(X_{n,k})$, $1 < r < 2$ and $X_{n,k}'' = X_{n,k} - X_{n,k}'$, being thus omitted.

\begin{lemma}\label{lem:3.1}
Let $1 \leqslant r \leqslant 2$ and $\{X_{n,k}, \, 1 \leqslant k \leqslant n, \, n \geqslant 1 \}$ be a triangular array of zero-mean row-wise pairwise NQD random variables such that $\mathbb{E} \, \lvert X_{n,k} \rvert^{r} < \infty$ for all $n \geqslant 1$ and any $1 \leqslant k \leqslant n$. Then
\begin{equation*}
\mathbb{E} \abs{\sum_{k=1}^{n} X_{n,k}}^{r} \leqslant C(r) \, \sum_{k=1}^{n} \mathbb{E} \, \lvert X_{n,k} \rvert^{r}, \quad n \geqslant 1
\end{equation*}
where $C(r) > 0$ depends only on $r$.
\end{lemma}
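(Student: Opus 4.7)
The plan is to handle the extreme cases $r=1$ and $r=2$ directly and to perform a truncation argument in between, mirroring Theorem 2.1 of \cite{Chen14}. When $r=1$ the triangle inequality gives the bound with $C(1)=1$. When $r=2$, using $\mathbb{E}\,X_{n,k}=0$ and expanding the square yields $\mathbb{E}\bigl|\sum_k X_{n,k}\bigr|^2 = \sum_k \mathbb{E}\,X_{n,k}^2 + 2\sum_{i<j} \mathbb{E}(X_{n,i} X_{n,j})$; the defining inequality for pairwise NQD, coupled with the vanishing means, makes each cross-product have non-positive expectation, delivering $C(2)=1$. The substantive work is therefore for $1<r<2$.

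For that range, I would fix $x>0$, set $X'_{n,k} := g_{x^{1/r}}(X_{n,k})$ and $X''_{n,k} := X_{n,k}-X'_{n,k}$, and start from the layer-cake identity
\begin{equation*}
\mathbb{E}\abs{\sum_{k=1}^n X_{n,k}}^{r} = \int_{0}^{\infty} \mathbb{P}\left\{\abs{\sum_{k=1}^n X_{n,k}} > x^{1/r}\right\} \mathrm{d}x.
\end{equation*}
Since the common mean is zero, the decomposition $\sum_k X_{n,k} = \sum_k(X'_{n,k}-\mathbb{E}\,X'_{n,k}) + \sum_k(X''_{n,k}-\mathbb{E}\,X''_{n,k})$ holds, so a union bound at height $x^{1/r}/2$ splits the tail probability into two pieces. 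For the truncated piece I would apply Chebyshev with exponent $2$: pairwise NQD is preserved under the non-decreasing map $g_{x^{1/r}}$, so the case $r=2$ already settled applies to $\{X'_{n,k}\}_k$ and gives $\mathbb{E}\bigl|\sum_k(X'_{n,k}-\mathbb{E}\,X'_{n,k})\bigr|^2 \leqslant \sum_k \mathbb{E}\,\lvert X'_{n,k}\rvert^2$. For the tail piece I would invoke Markov with exponent $1$ and the elementary bound $\lvert X''_{n,k}\rvert \leqslant \lvert X_{n,k}\rvert I_{\{\lvert X_{n,k}\rvert > x^{1/r}\}}$.

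The remaining task is to integrate these estimates in $x$. Writing $\mathbb{E}\,\lvert X'_{n,k}\rvert^2 = \mathbb{E}\,X_{n,k}^2 I_{\{\lvert X_{n,k}\rvert \leqslant x^{1/r}\}} + x^{2/r}\mathbb{P}\{\lvert X_{n,k}\rvert > x^{1/r}\}$ and exchanging the order of integration turns the truncated-piece integral into a constant (depending only on $r$) times $\mathbb{E}\,\lvert X_{n,k}\rvert^{r}$; the analogous step for the tail piece uses Fubini with the inner integral $\int_0^{\lvert X_{n,k}\rvert^r} x^{-1/r} \mathrm{d}x = \frac{r}{r-1}\lvert X_{n,k}\rvert^{r-1}$. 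Summing over $k$ then delivers the claim. The delicate point, and the main obstacle, is the balancing of two convergence requirements: the truncated-piece integral converges at infinity iff $r<2$, while the Fubini computation for the tail piece is valid at the origin iff $r>1$, which is precisely why the endpoints $r=1$ and $r=2$ require the separate, elementary treatment given above. A secondary verification along the way is that pairwise NQD passes through $g_{x^{1/r}}$, which is automatic from the definition since $g_{x^{1/r}}$ is monotone non-decreasing.
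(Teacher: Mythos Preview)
Your proposal is correct and follows exactly the route the paper indicates: the paper omits the proof, saying only that it ``can be performed as in Theorem 2.1 of \cite{Chen14} by employing the truncation $X_{n,k}' = g_{x^{1/r}}(X_{n,k})$, $1 < r < 2$ and $X_{n,k}'' = X_{n,k} - X_{n,k}'$,'' and your layer-cake plus truncation argument is precisely that. You also correctly isolate the two adaptations needed to pass from pairwise independence (Chen--Bai--Sung's setting) to pairwise NQD, namely that NQD is inherited by the monotone truncation $g_{x^{1/r}}$ and that the resulting non-positive covariances suffice for the second-moment bound.
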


\begin{remark}
It is worthy to note that using Lemma~\ref{lem:3.1} in Theorems~\ref{thr:2.1} and~\ref{thr:2.2}, we can extend Theorem 3.7 of \cite{Chen14} to sequences $\{X_{n}, \, n \geqslant 1 \}$ of pairwise NQD and identically distributed random variables, by admitting $X_{n,k} = X_{k}$, $p = r$, $c_{n} = n^{t - 2}$ and $b_{n} = n^{1/\rho}$ $(0 < \rho < 2 )$ with $1 \leqslant r \leqslant 2$, $t \geqslant 1$, and $t \rho < 2$.
\end{remark}

\begin{remark}
Let us point out that all statements presented throughout can be properly extended without effort to general arrays $\{X_{n,j}, \, 1 \leqslant j \leqslant k_{n}, \, n \geqslant 1 \}$, where $\{k_{n} \}$ is a sequence of positive integers such that $k_{n} \rightarrow \infty$ as $n \rightarrow \infty$. Considering also an array $\{\Psi_{n,j}(x), \, 1 \leqslant j \leqslant k_{n}, n \geqslant 1 \}$ of functions defined on $[0,\infty)$ satisfying \eqref{eq:3.4} for every $1 \leqslant j \leqslant k_{n}$ and all $n \geqslant 1$, we conclude from Lemma~\ref{lem:3.1} that our Corollary~\ref{cor:3.3} extends Theorem 1.1 of \cite{Yi18}.
\end{remark}

\begin{corollary}
Let $1 < r < 2$ and $\{X_{n,k}, \, 1 \leqslant k \leqslant n, \, n \geqslant 1 \}$ be a triangular array of row-wise pairwise NQD random variables such that $\mathbb{E} \, \lvert X_{n,k} \rvert^{r} < \infty$ for all $n \geqslant 1$ and any $1 \leqslant k \leqslant n$. If $1 \leqslant p < r$ and $\{b_{n} \}$ is a real sequence of positive constants satisfying, \\

\noindent \textnormal{(1)} $\sum_{n=1}^{\infty} \sum_{k=1}^{n} b_{n}^{-r} \int_{0}^{b_{n}^{r}} \mathbb{P} \left\{\lvert X_{n,k} \rvert^{r} > t \right\} \, \mathrm{d}t < \infty$, \\

\noindent \textnormal{(2)} $\sum_{n=1}^{\infty} \sum_{k=1}^{n} \mathbb{E} \, \lvert X_{n,k} \rvert I_{\left\{\lvert X_{n,k} \rvert > b_{n} \right\}}/b_{n} < \infty$, \\

\noindent \textnormal{(3)} $\sum_{n=1}^{\infty} \sum_{k=1}^{n} b_{n}^{-p} \int_{b_{n}^{p}}^{\infty} \mathbb{P} \left\{\lvert X_{n,k} \rvert^{p} > t \right\} \, \mathrm{d}t < \infty$, \\

\noindent then $\sum_{n=1}^{\infty} \mathbb{E} \left[\abs{\sum_{k=1}^{n} (X_{n,k} - \mathbb{E} \, X_{n,k})}/b_{n} - \varepsilon \right]_{+}^{p} < \infty$ for all $\varepsilon > 0$.
\end{corollary}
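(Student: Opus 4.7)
The plan is to reduce this corollary to a direct application of Theorem~\ref{thr:2.1} (when $p>1$) or Theorem~\ref{thr:2.2} (when $p=1$) with $q=r$ and with a constant sequence $\alpha_n$, taking $c_n \equiv 1$.

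First I would verify that the von Bahr--Esseen-type inequality \eqref{eq:2.1} holds with $q=r$ and a constant $\alpha_n$. Since the truncation $g_t$ is nondecreasing Borel-measurable, the pairwise NQD property is inherited by $\{g_t(X_{n,k}),\,1\leqslant k \leqslant n\}$, and centering by constants preserves it further; hence $\{g_t(X_{n,k}) - \mathbb{E}\,g_t(X_{n,k})\}$ is a zero-mean row-wise pairwise NQD array with finite $r$th moments (these are bounded by $(2t)^{r}$). Applying Lemma~\ref{lem:3.1} with exponent $r$, then using $|a-\mathbb{E}\,a|^{r}\leqslant 2^{r-1}(|a|^{r}+(\mathbb{E}|a|)^{r})$ and Jensen, I obtain
\begin{equation*}
\mathbb{E}\abs{\sum_{k=1}^{n}\bigl[g_t(X_{n,k})-\mathbb{E}\,g_t(X_{n,k})\bigr]}^{r} \leqslant 2^{r}\,C(r)\sum_{k=1}^{n}\mathbb{E}\,|g_t(X_{n,k})|^{r},
\end{equation*}
so \eqref{eq:2.1} is satisfied with $\alpha_n = 2^{r} C(r)$, a constant independent of $n$.

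Next I would split into the two cases. When $1<p<r$, since $\mathbb{E}|X_{n,k}|^{p}<\infty$ follows automatically from $\mathbb{E}|X_{n,k}|^{r}<\infty$ and $p<r$, Theorem~\ref{thr:2.1} is applicable with $q=r$, $c_{n}\equiv 1$ and the above constant $\alpha_n$; because $\alpha_n$ is a constant it may be absorbed, and hypotheses (a), (b), (c) of Theorem~\ref{thr:2.1} become exactly conditions (1), (2), (3) of the present corollary. When $p=1$, Theorem~\ref{thr:2.2} is applicable with the same $q=r>1$; its hypothesis (c), namely $\sum_{n}\sum_{k}(\alpha_{n}/b_{n})\int_{b_{n}}^{\infty}\mathbb{P}\{|X_{n,k}|>t\}\,\mathrm{d}t<\infty$, coincides with condition (3) of the corollary specialised to $p=1$ once the constant $\alpha_n$ is absorbed. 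In either case the desired conclusion follows immediately.

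No step of this argument is genuinely delicate; the only point demanding care is the preservation of pairwise NQD under the monotone truncation $g_t$ and under centering, which is a standard structural property of NQD families. The rest is bookkeeping to match the hypotheses of Theorems~\ref{thr:2.1} and~\ref{thr:2.2} to the three conditions imposed here.
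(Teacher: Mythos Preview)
Your proposal is correct and follows essentially the same route as the paper: the paper's proof simply invokes Lemma~\ref{lem:3.1} to obtain \eqref{eq:2.1} with $q=r$ and a constant $\alpha_n$, then applies Theorems~\ref{thr:2.1} and~\ref{thr:2.2} with $c_n\equiv 1$. Your additional care in passing from Lemma~\ref{lem:3.1} (which bounds by the centered $r$th moments) to the form \eqref{eq:2.1} (which has uncentered moments on the right) via the factor $2^{r}$ is a detail the paper leaves implicit.
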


\begin{proof}
From previous Lemma~\ref{lem:3.1} we obtain \eqref{eq:2.1} with $q = r$ and $\alpha_{n} = C(r)$. The thesis follows from Theorems~\ref{thr:2.1} and~\ref{thr:2.2} by taking $c_{n} = 1$ for all $n \geqslant 1$.
\end{proof}

\section{Final comments}

In 1947, Hsu and Robbins \cite{Hsu47} introduced the concept of complete convergence (see also \cite{Gut94} for a survey). By taking $c_{n} = 1$ for all $n \geqslant 1$ in \eqref{eq:1.3}, we obtain that $\abs{\sum_{k=1}^{n} (X_{n,k} - \mathbb{E} \, X_{n,k})}/b_{n}$ converges completely to zero: indeed, setting $A_{n}(\varepsilon) := \left\{\omega\colon \abs{\sum_{k=1}^{n} (X_{n,k} - \mathbb{E} \, X_{n,k})}/b_{n} - \varepsilon \geqslant 0 \right\}$, we have, for each $\delta > 0$,
\begin{equation*}
\begin{split}
\mathbb{E} \left[\frac{\abs{\sum_{k=1}^{n} (X_{n,k} - \mathbb{E} \, X_{n,k})}}{b_{n}} - \varepsilon \right]_{+}^{p} &\geqslant \delta \, \mathbb{P} \left\{\left[\frac{\abs{\sum_{k=1}^{n} (X_{n,k} - \mathbb{E} \, X_{n,k})}}{b_{n}} - \varepsilon \right]_{+}^{p} > \delta \right\} \\
&= \delta \, \mathbb{P} \left[\left\{\left[\frac{\abs{\sum_{k=1}^{n} (X_{n,k} - \mathbb{E} \, X_{n,k})}}{b_{n}} - \varepsilon \right]_{+}^{p} > \delta \right\} \cap A_{n}(\varepsilon) \right] \\
&\quad + \delta \, \mathbb{P} \left[\left\{\left[\frac{\abs{\sum_{k=1}^{n} (X_{n,k} - \mathbb{E} \, X_{n,k})}}{b_{n}} - \varepsilon \right]_{+}^{p} > \delta \right\} \cap A_{n}(\varepsilon)^{\complement} \right] \\
&= \delta \, \mathbb{P} \left\{\frac{\abs{\sum_{k=1}^{n} (X_{n,k} - \mathbb{E} \, X_{n,k})}}{b_{n}} > \varepsilon + \delta^{1/p} \right\}
\end{split} 
\quad (p \geqslant 1)
\end{equation*}
Furthermore, by using the elementary inequality $\lvert x \rvert^{p} \leqslant \max(1,2^{p-1}) \left[(\lvert x \rvert - \varepsilon)_{+}^{p} + \varepsilon^{p} \right]$ for every real number $x$ and any $p,\varepsilon > 0$, it follows
\begin{equation*}
\mathbb{E} \abs{\frac{\sum_{k=1}^{n} (X_{n,k} - \mathbb{E} \, X_{n,k})}{b_{n}}}^{p} \leqslant 2^{p-1} \left\{\mathbb{E} \left[\frac{\abs{\sum_{k=1}^{n} (X_{n,k} - \mathbb{E} \, X_{n,k})}}{b_{n}} - \varepsilon \right]_{+}^{p} + \varepsilon^{p} \right\} \qquad (p \geqslant 1)
\end{equation*}
so that, our statements also guarantee the convergence in mean of order $p$ (to zero) for triangular arrays of random variables under the considered assumptions.

\section*{Acknowledgements}

This work is a contribution to the Project UIDB/04035/2020, funded by FCT - Funda\c{c}\~{a}o para a Ci\^{e}ncia e a Tecnologia, Portugal.


\begin{thebibliography}{99}

\bibitem{Chen14} P. Chen, P. Bai, S. H. Sung, The von Bahr--Esseen moment inequality for pairwise independent random variables and applications, J. Math. Anal. Appl. 419 (2014) 1290--1302.

\bibitem{Chen08} P. Chen, D. C. Wang, Convergence rates for probabilities of moderate deviations for moving average processes, Acta Math. Sin. 24 (2008) 611--622.

\bibitem{Gut94} A. Gut, Complete convergence, in: Asymptotics Statistics, Proceedings of the Fifth Prague Symposium, P. Mandl et al. (eds.), Springer-Verlag, Berlin Heidelberg, 237--247, 1994.

\bibitem{Hsu47} P. L. Hsu, H. Robbins, Complete convergence and the law of large numbers, Proc. Nat. Acad. Sci. USA 33 (1947) 25--31.

\bibitem{Li05} D. Li, A. Sp\u{a}taru, Refinement of convergence rates for tail probabilities, J. Theor. Probab. 18 (2005) 933--947.

\bibitem{Lita16} J. Lita da Silva. Convergence in $p$-mean for arrays of row-wise extended negatively dependent random variables, Acta Math. Hungar. 150(2) (2016) 346--362.

\bibitem{Lita19} J. Lita da Silva. Convergence in $p$-mean for arrays of random variables, Results Math. 74:38 (2019). %https://doi.org/10.1007/s00025-019-0959-1

\bibitem{Petrov95} V. V. Petrov, Limit Theorems of Probability Theory: Sequences of Independent Random Variables, Oxford Studies in Probability, Vol. 4, Clarendon Press, Oxford, 1995.

\bibitem{Sung09} S. H. Sung, Moment inequalities and complete moment convergence, J. Inequal. Appl. Article ID 271265, 14 pages, Volume 2009.

\bibitem{von65} B. von Bahr, C. G. Esseen, Inequalities for the $r$th absolute moment of a sum of random variables, $1 \leqslant r \leqslant 2$, Ann. Math. Stat. 36(1) (1965) 299--303.

\bibitem{Wu13} Y. Wu, On limiting behavior for arrays of rwwise linearly negative quadrant dependent random variables, Rocky Mountain J. Math. 43(4) (2013) 1375--1394.

\bibitem{Wu14} Y. Wu, M. Song, C. Wang, Complete moment convergence and mean convergence for arrays of rowwise extended negatively dependent random variables, The Scientific World Journal, Article ID 478612, 7 pages, Volume 2014.

\bibitem{Wu15} Y. Wu, J. Peng, T. C. Hu, Limiting behaviour for arrays of row-wise END random variables under conditions of $h$-integrability, Stochastics 87(3) (2015) 409--423.

\bibitem{Yi18} Y. Wu, X. Wang. Complete $q$-th moment convergence and its application in the dependent bootstrap, Stochastics 90(7) (2018) 992--1004.

\end{thebibliography}
\end{document}